\DeclareMathOperator{\sprod}{SP}
\newcommand{\bN}{\mathbb{N}}
\newcommand{\cP}{\mathcal{P}}
\newcommand{\cC}{\mathcal{C}}
\newcommand{\cD}{\mathcal{D}}
\newcommand{\cS}{\mathcal{S}}
\newcommand{\cT}{\mathcal{T}}
\newcommand{\cV}{\mathcal{V}}
\newcommand{\dsA}{\mathds{A}}
\newcommand{\dsE}{\mathds{E}}
\newcommand{\dsH}{\mathds{H}}
\newcommand{\dsX}{\mathds{X}}
\newcommand{\es}{\approx}
\newcommand{\ls}{\lesssim}
\newcommand{\gs}{\gtrsim}
\newcommand{\at}{\alpha_{\tau}}
\newcommand{\ato}{\alpha_{\tau}^{\text{opt}}}
\newcommand{\Ds}{D^{\ast}_N}
\newcommand{\eps}{\varepsilon}
\newcommand{\epst}{\varepsilon^{\tau}}
\newcommand{\I}{[0,1)}
\newcommand{\Psisd}{\Psi^{\textsf{sd}}}
\newcommand{\Psinsd}{\Psi^{\neg}}
\newcommand{\rsf}{\mathsf{r}}
\newcommand{\Scycle}{\Sigma_{\text{cycle}}}
\newcommand{\Stree}{\Sigma_{\text{tree}}}
\newcommand{\Sti}{\Sigma_1^{\tau}}
\newcommand{\Stii}{\Sigma_2^{\tau}}
\newcommand{\Ssl}{S_{<l}}
\newcommand{\Sgl}{S_{\geq l}}
\newcommand{\trho}{\tilde{\rho}}
\newcommand{\vr}{\vec{r}}
\newcommand{\vs}{\vec{s}}
\newcommand{\vt}{\vec{t}}
\theoremstyle{plain}		\newtheorem{theorem}{Theorem}[section]
				\newtheorem{lemma}[theorem]{Lemma}
\theoremstyle{definition}	\newtheorem{proposition}[theorem]{Proposition}
				\newtheorem{remark}[theorem]{Remark}
				\newtheorem{definition}[theorem]{Definition}
\title[On an explicit lower bound for the star discrepancy]{On an explicit lower bound for the star discrepancy in three dimensions}
\author{Florian Puchhammer}
 \address{{\bf{Florian Puchhammer}}\\
             Institute of Financial Mathematics and
             Applied Number Theory\\
			University Linz\\
 			Altenbergerstraße 69\\
 			4040 Linz\\
			AUSTRIA}
 \email{florian.puchhammer@jku.at}
\keywords{uniform distribution, discrepancy, number theory}
\subjclass{11K38, 11K06}
\thanks{The author is supported by the Austrian Science Fund (FWF),
Project F5507-N26,
which~is a part of the Special Research Program
``Quasi-Monte Carlo Methods: Theory and Applications''\!}
\begin{document}

\begin{abstract}
Following a result of D.~Bylik and M.T.~Lacey from 2008 it is known that there exists an absolute constant $\eta>0$ such that the (unnormalized) $L^{\infty}$-norm of the three-dimensional discrepancy function, i.e. the (unnormalized) star discrepancy $D^{\ast}_N$, is bounded from below by $D_{N}^{\ast}\geq c (\log N)^{1+\eta}$, for all $N\in\mathbb{N}$ sufficiently large, where $c>0$ is some constant independent of $N$. This paper  builds upon their methods to verify that the above result holds with $\eta<1/(32+4\sqrt{41})\approx0.017357\ldots$
\end{abstract}

\maketitle
\section{Introduction and statement of the result}

Suppose we are given a set $\cP_N$ consisting of $N$ points in the $d$-dimensional unit cube. We intend to investigate how \emph{well} this set is distributed in $\I^{d}$. To this end we introduce the \emph{discrepancy function}
\begin{equation*}
 D_N(x):=N \lambda_d([0,x))-\#(\cP_N\cap[0,x)),\qquad x\in\I^{d},
\end{equation*}
i.e. the difference between the expected and actual number of points of $\cP_N$ in $[0,x)$ if we assume uniform distribution. Here, $\lambda_d$ denotes the $d$-dimensional Lebesgue measure and we abbreviated $[0,x)=[0,x_1)\times[0,x_2)\times\cdots\times[0,x_d)$ for $x=(x_1,x_2,\ldots,x_d)$. Furthermore, we refer to its $L^{\infty}$-norm
\begin{equation*}
 \Ds:=\sup_{x\in\I^{d}}|D_N(x)|
\end{equation*}
as \emph{star discrepancy}. Notice that, in other literature, this entity often appears as a normalized version, i.e. $\Ds/N$.

Over time an extensive theory has evolved around the magnitude of $\Ds$ in terms of $N$ for arbitrary as well as for specific point sets. See, for instance, the books \cite{DicDig10,MatGeo99,KuiUni74}, just to name a few. Finding the exact order of growth seems to be an intriguingly difficult question and has not yet been solved for dimensions three or higher. In this paper we focus on a lower bound for the star discrepancy of arbitrary sets of $N$ points in the three-dimensional case based on the work of D.~Bilyk and M.T.~Lacey \cite{BilOnt083}. More precisely, we show
\begin{theorem}
 \label{thm:main}
 For all $N$-point sets in $\I^{3}$, with $N$ sufficiently large, the star discrepancy satisfies
 \begin{equation*}
  \Ds\geq C(\log N)^{1+\eta}, \quad\text{for all } \eta<1/(32+4\sqrt{41})\es0.017357\ldots
 \end{equation*}
\end{theorem}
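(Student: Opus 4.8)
The plan is to refine the small-ball / harmonic-analysis machinery of Bilyk–Lacey so that the exponent of the logarithmic gain is tracked explicitly and optimised. Recall that the Bilyk–Lacey argument proceeds by constructing a test function against which the discrepancy function $D_N$ has a large inner product. One starts from a Riesz-product-type object built out of hyperbolic Haar functions (or their smoothed ``$r$-function'' analogues $\rsf$-functions) supported on dyadic rectangles of a fixed volume $2^{-n}$. In three dimensions the relevant index set for a rectangle of volume $2^{-n}$ is the set of $\vec{r}=(r_1,r_2,r_3)\in\bN^3$ with $r_1+r_2+r_3=n$, which has roughly $n^2$ elements. The heart of the matter is a combinatorial/analytic lemma (the ``product rule'') describing how many times a product of $\rsf$-functions from different levels fails to be constant — this is governed by the structure of coincidences among the coordinate resolutions. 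I would therefore first re-derive the exact small-ball inequality, i.e. a bound of the form $\|\sum_{\vec r}f_{\vec r}\|_\infty\gtrsim n^{-\beta}\sum_{\vec r}\|f_{\vec r}\|_\infty$-type estimates, but keeping the precise power of $n$ rather than absorbing it into an unspecified constant.

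**Next I would** set up the two-parameter optimisation that produces the numerical value $1/(32+4\sqrt{41})$. The method uses a ``product'' consisting of two pieces: a tree-like / short-range part $\Stree$ and a cyclic / long-range part $\Scycle$ (this is why the macros $\Scycle$, $\Stree$, $\Sti$, $\Stii$, $\Ski$, $\Skii$ are declared), each entering with its own scaling parameter — call them $\at$ (the ``$\tau$'' parameter) and $\ak$ (the ``$\kappa$'' parameter). Expanding the Riesz product, the main term contributes something of size comparable to the full sum of norms, while the error terms — coming from incomplete cancellation in the product — are controlled by a quantity that grows like a power of $\at$ and $\ak$ times combinatorial factors. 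Balancing ``main term beats error term'' forces a system of inequalities in $\at,\ak$ and in the exponents; the optimal choice is the one where several of these constraints become equalities simultaneously, and solving that system yields a quadratic whose root is $16+2\sqrt{41}$, giving $\eta<1/(32+4\sqrt{41})$. I would carry this out by writing down the gain from one application of the small-ball inequality as a function of the parameters, iterating it $\asymp\log\log N$ times (the number of dyadic scales one can afford), and then extremising.

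**The main obstacle** I expect is the bookkeeping in the error estimates for the Riesz product in dimension three: unlike the two-dimensional case, where the product of two $\rsf$-functions from different scales is handled cleanly, in three dimensions one must classify the ways in which three-dimensional index vectors can ``interact'' — whether two of the three coordinates coincide, whether the rectangles are nested in some coordinates and transverse in others, and so on. Keeping the constants explicit through this case analysis (rather than hiding them in $\ls$) is exactly what is needed to pin down the exponent, and it is where the bulk of the technical work — and the risk of a suboptimal constant — lies. A secondary difficulty is ensuring that the number of iterations one can perform, together with the per-iteration gain, really compounds to $(\log N)^{1+\eta}$ and not merely $(\log N)(\log\log N)^{c}$; this requires the per-step gain to be of the form $1+\Theta(1/j)$ at the $j$-th step, whose summation $\sum 1/j\asymp\log\log N\asymp\log(\log N)$ exponentiates correctly. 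I would verify this compounding step carefully before trusting the numerical optimum.
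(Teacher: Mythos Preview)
Your identification of the overall architecture --- a Hal\'asz-type duality argument with a Riesz product of $\rsf$-functions, and a decomposition of the error terms according to the coincidence structure (the $\Stree$/$\Scycle$ split you read off from the macros) --- is correct and matches the paper. However, the mechanism you describe for \emph{how} the exponent $1+\eta$ arises is wrong, and this is a genuine gap.

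There is no iteration. The paper does not apply any small-ball-type inequality repeatedly and compound gains over $\log\log N$ steps; the whole argument is a \emph{single} duality estimate $\Ds\geq\langle D_N,\Psisd\rangle/\|\Psisd\|_1$. The Riesz product is built with $q=n^{\eps}$ blocks and a damping factor $\trho\approx q^{1/4}n^{-1}$, and a direct computation gives $\langle D_N,\Psisd\rangle\gtrsim q^{1/4}n\approx(\log N)^{1+\eps/4}$ in one shot. So $\eta=\eps/4$, and the entire question reduces to: how large can $\eps$ be while keeping $\|\Psisd\|_1\lesssim 1$? Your compounding calculation $\prod_j(1+\Theta(1/j))$ is not part of the argument at all, and in fact such a product would only yield a $(\log\log N)^{c}$ factor, exactly the Beck-type bound you say you want to avoid.

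The optimisation that produces $\sqrt{41}$ is also not what you describe. The parameters $\at,\ak$ are not scaling parameters of the Riesz product to be balanced against each other; $\at$ is a \emph{splitting point} in a sum over $l$, the number of connected components of an admissible coincidence graph on $v$ vertices. One bounds $\sum_{l\leq\at v}(\cdots)$ and $\sum_{l>\at v}(\cdots)$ by two different applications of the Littlewood--Paley/Beck-gain estimate (Lemma~\ref{lemma:beckgain}), each valid for $\eps$ below a certain rational function of $\at$; the optimal $\at$ equalises two of these constraints and solves to $\ato=(\sqrt{41}-5)/4$, giving $\eps<(8-\sqrt{41})/23$ and hence $\eta<\eps/4=1/(32+4\sqrt{41})$. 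The dominant contribution comes from the generalised-tree graphs (Cayley's formula controls their count), and the cycle part $\Scycle$ is shown to be subordinate rather than balanced against $\Stree$. You should replace the iterative picture with this one-shot duality plus combinatorial bookkeeping on admissible graphs.
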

To the author's best knowledge this is the first quantitative result with respect to $\eta$.

It is worth mentioning that the basic inherent ideas reach back to K.F. Roth's seminal paper \cite{RotOni54}, in which he showed 
\begin{theorem}[Roth, 1954]
We have $\Ds\geq \|D_N\|_2\geq c_d \left(\log N\right)^{(d-1)/2}$ for all $d\geq2$.
\end{theorem}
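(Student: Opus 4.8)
The inequality $\Ds=\|D_N\|_{\infty}\geq\|D_N\|_2$ is immediate, since $(\I^{d},\lambda_d)$ is a probability space; the content is the lower bound $\|D_N\|_2\gs_d(\log N)^{(d-1)/2}$, which I would establish by testing $D_N$ against a carefully built Haar polynomial. Fix the integer $n$ with $2^{n-1}\le 2N<2^{n}$, so that $n\es\log N$ and, for \emph{every} representation $n=r_1+\dots+r_d$ with integers $r_i\ge 0$, the family of dyadic boxes of dimensions $2^{-r_1}\times\dots\times 2^{-r_d}$ partitions $\I^{d}$ into $2^{n}>2N$ congruent boxes, strictly more than half of which contain no point of $\cP_N$. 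To a box $R=\prod_i[a_i2^{-r_i},(a_i+1)2^{-r_i})$ associate the tensor-product Haar function $h_R=\bigotimes_i h_{r_i,a_i}$, where $h_{r,a}$ denotes the $L^{\infty}$-normalised one-dimensional Haar function ($+1$ on the left half of $[a2^{-r},(a+1)2^{-r})$, $-1$ on the right half). These $h_R$ are pairwise orthogonal in $L^{2}(\I^{d})$, over all positions and over all ``shape vectors'' $\vr=(r_1,\dots,r_d)$.

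The heart of the matter is the computation of the Haar coefficients of $D_N=N\lambda_d([0,\cdot))-\#(\cP_N\cap[0,\cdot))$ on point-free boxes. Since $\lambda_d([0,x))=\prod_i x_i$ and $\#(\cP_N\cap[0,x))=\sum_{p\in\cP_N}\prod_i\mathbf{1}_{\{p_i<x_i\}}$, both pieces factor into products of one-dimensional integrals against the $h_{r_i,a_i}$. From $\int_0^1 x\,h_{r,a}(x)\ud x=-2^{-2r}/4$, which is independent of $a$, one obtains $\langle N\lambda_d([0,\cdot)),h_R\rangle=(-1)^{d}N4^{-d}2^{-2n}$, the same nonzero value for every box of a fixed shape; and because each one-dimensional Haar function is supported on its dyadic interval and has mean zero there, $\langle\#(\cP_N\cap[0,\cdot)),h_R\rangle$ reduces to a sum over the points of $\cP_N$ lying in $R$, hence vanishes when $R$ is empty. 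Thus for every empty $R$ of shape $\vr$ we get $\sgn(\langle D_N,h_R\rangle)=(-1)^{d}$ and $|\langle D_N,h_R\rangle|=N4^{-d}2^{-2n}\gs_d 2^{-n}$.

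Now put $f_{\vr}=(-1)^{d}\sum_{R}h_R$, the sum over the (more than $2^{n-1}$) empty boxes of shape $\vr$, and $F=\sum_{|\vr|=n}f_{\vr}$. There are $\binom{n+d-1}{d-1}\es_d n^{d-1}$ shape vectors, so $\langle D_N,F\rangle=\sum_{\vr}\sum_{R\text{ empty}}|\langle D_N,h_R\rangle|\gs_d 2^{n-1}\cdot 2^{-n}\cdot n^{d-1}\gs_d n^{d-1}$. On the other hand, orthogonality of the $h_R$ together with $\|h_R\|_2^{2}=\lambda_d(R)=2^{-n}$ gives $\|F\|_2^{2}=\sum_{\vr}\sum_{R\text{ empty}}\|h_R\|_2^{2}\le\sum_{\vr}2^{n}\cdot 2^{-n}\ls_d n^{d-1}$, i.e.\ $\|F\|_2\ls_d n^{(d-1)/2}$. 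An application of Cauchy--Schwarz, $\langle D_N,F\rangle\le\|D_N\|_2\,\|F\|_2$, then yields $\|D_N\|_2\gs_d n^{(d-1)/2}\es(\log N)^{(d-1)/2}$, and combined with the first sentence this proves the theorem.

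The only genuinely delicate step is the coefficient computation of the second paragraph: verifying that the counting part of $D_N$ is annihilated by $h_R$ on point-free boxes, and that the linear part contributes a coefficient of a definite sign and of size $\es_d 2^{-n}$. Everything else is routine --- the combinatorial count of shape vectors, the orthogonality bookkeeping for $\|F\|_2$, and the one-line Cauchy--Schwarz argument --- with the single additional point requiring a little care being the choice of $n$ (comparing against $2N$ rather than $N$) so that a fixed positive proportion of the boxes of \emph{each} shape is guaranteed to be empty.
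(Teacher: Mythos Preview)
Your proof is correct and is precisely Roth's classical orthogonal-function argument. Note, however, that the paper does not give its own proof of this statement: Roth's theorem is quoted as background with a citation to \cite{RotOni54}, and the paper merely remarks that ``it was his approach using the system of Haar functions and Haar decompositions which struck a chord.'' Your write-up is exactly that approach---test $D_N$ against the Haar polynomial built from point-free hyperbolic boxes, compute the coefficient on empty boxes via the one-dimensional identity $\int x\,h_{r,a}(x)\ud x=-2^{-2r}/4$, and close with Cauchy--Schwarz and the orthogonality of the Haar system---so there is nothing to contrast. One cosmetic point: your sign convention for $h_{r,a}$ ($+1$ on the left half) is the opposite of the paper's Definition~\ref{def:haarfunction} ($h_J=-\mathds{1}_{J_l}+\mathds{1}_{J_r}$), but this is immaterial since only $|\langle D_N,h_R\rangle|$ enters the argument.
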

Although this bound is now known not to be sharp for $\Ds$ (see Schmidt's theorem below) it was his approach using the system of Haar functions and Haar decompositions which struck a chord at that time and lead to a completely new methodology for proving discrepancy bounds. For a comprehensive survey see \cite{BilOnr14}, for instance. 

It took as much as 18 years until a better estimate for $\Ds$ in the two-dimensional case was discovered by W.M. Schmidt, see \cite{SchIrr72}:
\begin{theorem}[Schmidt, 1972]
\label{thm:schmidt}
 For $d=2$ we have $ \Ds\geq C \log N$.
\end{theorem}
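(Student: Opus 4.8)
The plan is to prove Schmidt's bound by the Riesz‑product technique of Hal\'asz, which upgrades Roth's $L^{2}$‑orthogonality argument to the $L^{\infty}$ norm. Fix the integer $n$ with $2^{n-1}\leq 2N<2^{n}$, so that dyadic boxes of area $2^{-n}$ have area below $1/(2N)$; for $\vr=(r_{1},r_{2})\in\bN^{2}$ with $r_{1}+r_{2}=n$ let $\cD_{\vr}$ denote the family of the $2^{n}$ dyadic rectangles $R=R_{1}\times R_{2}\subset\I^{2}$ with $|R_{i}|=2^{-r_{i}}$, and note there are $n+1$ admissible shapes $\vr$. Let $h_{R}$ be the $L^{\infty}$‑normalized Haar function on $R$, i.e.\ the tensor product of two one‑dimensional Haar functions taking value $+1$ on the left (resp.\ lower) half of $R_{i}$, $-1$ on the other half, and $0$ outside. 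I would begin with Roth's \emph{empty‑box lemma}: since each Haar factor has vanishing mean in its variable, for the $L^{2}(\I^{2})$ pairing a point of $\cP_{N}$ contributes to $\langle\#(\cP_{N}\cap[0,\cdot)),h_{R}\rangle$ only when it lies in $R$, so for an empty $R$ one has $\langle D_{N},h_{R}\rangle=N\langle\lambda_{2}([0,\cdot)),h_{R}\rangle=N|R|^{2}/16>0$. As at most $N<2^{n-1}$ boxes of $\cD_{\vr}$ meet $\cP_{N}$, setting $f_{\vr}:=\sum\{h_{R}:R\in\cD_{\vr},\ R\cap\cP_{N}=\emptyset\}$ gives $\langle D_{N},f_{\vr}\rangle\geq c_{0}$ for an absolute $c_{0}>0$, while $\|f_{\vr}\|_{\infty}\leq 1$ as the boxes in $\cD_{\vr}$ are disjoint.

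Next I would form the \emph{Riesz product} $\Psi:=\prod_{\vr}(1+\gamma f_{\vr})-1$ over all $n+1$ shapes, with a small absolute constant $\gamma\in(0,1)$ to be pinned down later. Since $|\gamma f_{\vr}|\leq\gamma<1$, the function $1+\Psi$ is nonnegative. Expanding, $\Psi=\sum_{\emptyset\neq\cS}\gamma^{|\cS|}\prod_{\vr\in\cS}f_{\vr}$, and each $\prod_{\vr\in\cS}f_{\vr}$ is a sum of products $\prod_{\vr\in\cS}h_{R_{\vr}}$. The structural fact carrying the argument is that, for distinct shapes in $\cS$, the first‑ (resp.\ second‑) coordinate factors have pairwise distinct dyadic lengths, hence are nested wherever the product does not vanish, so a chain of one‑dimensional Haar functions telescopes to $\pm$ the finest one; consequently $\prod_{\vr\in\cS}h_{R_{\vr}}=\pm h_{\bar R}$ for a single dyadic rectangle $\bar R$ of shape $(\max_{\vr\in\cS}r_{1},\ n-\min_{\vr\in\cS}r_{1})$, and $0$ if the boxes are not all compatibly nested. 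In particular every summand with $|\cS|\geq 1$ has mean zero, so $\int_{\I^{2}}\Psi=0$, the nonnegative function $1+\Psi$ is a probability density, and $\|\Psi\|_{1}\leq 2$.

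The heart of the proof is the estimate of $\langle D_{N},\Psi\rangle=\gamma\sum_{\vr}\langle D_{N},f_{\vr}\rangle+\mathcal{E}$, where $\mathcal{E}$ collects the $|\cS|\geq 2$ terms; by the empty‑box lemma the main term is $\geq\gamma c_{0}(n+1)$. For $\mathcal{E}$: given $\cS$, once the rectangle $\bar R$ is prescribed the tuple $(R_{\vr})_{\vr\in\cS}$ is forced ($R_{\vr}$ being the unique box of shape $\vr$ containing $\bar R$), and a term survives only if every $R_{\vr}$ is empty, in which case $\bar R$ lies inside the empty box $R_{\vr^{\ast}}$ of the shape $\vr^{\ast}\in\cS$ with $r_{1}=\max_{\vr\in\cS}r_{1}$, so $\#(\cP_{N}\cap\bar R)=0$ and $\langle D_{N},h_{\bar R}\rangle=N|\bar R|^{2}/16$. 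Writing $w=w(\cS):=\max_{\vr\in\cS}r_{1}-\min_{\vr\in\cS}r_{1}\geq 1$, there are at most $2^{\,n+w}$ rectangles of the shape of $\bar R$, each of area $2^{-(n+w)}$, so $\bigl|\langle D_{N},\prod_{\vr\in\cS}f_{\vr}\rangle\bigr|\leq\sum_{\bar R}N|\bar R|^{2}/16\leq(N2^{-n})\,2^{-w}/16\ls 2^{-w}$. Grouping the admissible $\cS$ by the pair $(\min_{\vr\in\cS}r_{1},\max_{\vr\in\cS}r_{1})$ — of which there are $n+1-w$ with gap $w$, and for each one the generating sum $\sum_{\cS}\gamma^{|\cS|}$ equals $\gamma^{2}(1+\gamma)^{w-1}$ — gives
\[
|\mathcal{E}|\ls\sum_{w\geq 1}(n+1-w)\,2^{-w}\gamma^{2}(1+\gamma)^{w-1}\ls\gamma^{2}(n+1)\sum_{w\geq 1}\bigl(\tfrac{1+\gamma}{2}\bigr)^{w}\ls\gamma^{2}(n+1).
\]
Hence $\langle D_{N},\Psi\rangle\geq\gamma c_{0}(n+1)-C\gamma^{2}(n+1)$ with $C$ absolute, and fixing $\gamma:=\min\{1/2,\ c_{0}/(2C)\}$ gives $\langle D_{N},\Psi\rangle\geq\tfrac12\gamma c_{0}(n+1)\gs n$. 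Finally, by H\"older's inequality $\Ds=\|D_{N}\|_{\infty}\geq\langle D_{N},\Psi\rangle/\|\Psi\|_{1}\gs n>\log_{2}N$, which is the claimed bound with an absolute constant.

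The delicate step is the bound on $\mathcal{E}$. A crude estimate $\bigl|\langle D_{N},\prod_{\vr\in\cS}f_{\vr}\rangle\bigr|\leq\|\prod_{\vr\in\cS}f_{\vr}\|_{1}\,\Ds$ is useless, and even with the telescoping identity in hand the number $2^{w-1}$ of subsets $\cS$ of a given ``gap'' $w$ exactly neutralizes the geometric gain $2^{-w}$ coming from the area of $\bar R$, so a naive count produces an error of order $n^{2}$, which swamps the linear main term. The way out is to refrain from expanding the binomial sum over the ``middle'' shapes, whereby the error organizes into $\gamma^{2}(n+1)\sum_{w}((1+\gamma)/2)^{w}$, a convergent geometric series once $\gamma<1$, and then to take $\gamma$ a small but \emph{fixed} absolute constant, so that $\mathcal{E}$ becomes a definite fraction of the main term while the latter stays of order $n$. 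Everything else — the empty‑box coefficient computation, the telescoping identity, and the nonnegativity and unit mass of $1+\Psi$ — is routine.
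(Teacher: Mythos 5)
Your proof is correct and follows exactly the strategy the paper sketches in Section~\ref{sec:halasz}: a Riesz product $\prod(1+\gamma f_k)-1$ built from hyperbolic Haar sums, the product rule to telescope products into single Haar functions (giving $\|\Phi\|_1\leq2$), the empty-box computation $\langle D_N,h_R\rangle=N|R|^2/16$ for the main term, and duality to conclude. Your particular choice of coefficients (indicator of empty boxes rather than signs) and the geometric-series control of the higher-order terms via the gap $w$ are standard variants of Hal\'asz's argument, to which the paper defers without giving details.
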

This bound is even known to be sharp. Later, in 1981, G.~Hal{\'a}sz managed to give a proof of Schmidt's result by refining Roth's approach via introducing special auxiliary functions, namely Riesz products, and using duality, see \cite{HalOnr81}. Both, Roth's and Hal{\'a}sz' proof are also to be found in \cite{MatGeo99}. Unfortunately, Hal{\'a}sz' methods are not directly applicable to higher dimensions, due to a shortfall of certain orthogonality properties.

This shortfall leads us to yet another main ingredient of the proof of Bilyk and Lacey as well as of this paper. In \cite{BecAtw89} J.~Beck laid the groundwork for combining Hal{\'a}sz' approach to graph theory and probability theory in three dimensions. He thereby gave the first improvement to Roth's bound  by a double logarithmic factor in this case. In fact, he proved the following theorem.
\begin{theorem}[Beck, 1989]
\label{thm:beck}
For all $N$-point sets in $\I^{3}$ and for all $\eps>0$  we have
\begin{equation*}
 \Ds\geq C_{\eps} \log N \cdot(\log\log N)^{1/8-\eps}
\end{equation*}
\end{theorem}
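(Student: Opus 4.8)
The plan is to follow the Roth--Hal\'asz scheme, amplified by Beck's probabilistic and combinatorial devices so that it survives the passage to $d=3$. Fix an integer $n$ with $2^{n}\es N$, so $n\es\log N$, and for each \emph{shape} $\vr=(r_1,r_2,r_3)\in\bN^{3}$ with $r_1+r_2+r_3=n$ build, from the Haar coefficients of $D_N$ on dyadic boxes of side-lengths $2^{-r_1},2^{-r_2},2^{-r_3}$, an \emph{$\vr$-function} $f_{\vr}$ normalized so that $\|f_{\vr}\|_{\infty}\le1$ and, crucially, so that it is correlated with the discrepancy: a Roth-type argument on the $2^{n}$ small boxes (using $N\es2^{n}$) yields $\int_{\I^{3}}D_N f_{\vr}\gs1$ for all, or almost all, such $\vr$. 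The goal is then to test $D_N$ against a Riesz product $\Psi=\prod_{\vr\in\dsA}(1+\gamma f_{\vr})$, with a small absolute constant $\gamma>0$ and a judiciously chosen family $\dsA$ of shapes of level $n$, and to read off $\Ds\ge\|D_N\|_{\infty}\ge\langle D_N,\Psi\rangle/\|\Psi\|_{1}$.

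Expanding the product gives $\langle D_N,\Psi\rangle=\gamma\sum_{\vr\in\dsA}\int D_N f_{\vr}+\sum_{k\ge2}\gamma^{k}\sum_{\{\vr_1,\dots,\vr_k\}\subseteq\dsA}\int D_N f_{\vr_1}\cdots f_{\vr_k}$, in which the linear term is $\gs\gamma|\dsA|$, so everything reduces to dominating the higher-order terms. Here one invokes the \emph{product rule} for $\vr$-functions: a product $f_{\vr_1}\cdots f_{\vr_k}$ is, up to harmless lower-order pieces, again a bounded multiple of an $\vs$-function of level at most $n$, provided the shapes are in \emph{general position}; the dangerous configurations are exactly the \emph{coincidences}, where two of the shapes agree in some coordinate, $r_i=s_i$. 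In $d=2$ there is only one free coordinate, any two distinct shapes of level $n$ are automatically in general position, one may take $\dsA$ to consist of all $n$ of them, and this reproduces the $\Ds\gs n\gs\log N$ of Schmidt's theorem. In $d=3$ there are $\es n^{2}$ shapes of level $n$, but they are no longer pairwise generic and one cannot keep them all.

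Beck's resolution, which I would reproduce, is to choose $\dsA$ at random: retain each shape of level $n$ independently with a suitable probability $p$ (equivalently, take a random subset of prescribed size). Record the coincidences by a graph on the retained shapes; a Tur\'an/deletion argument, combined with a careful count of how many $k$-subsets of $\dsA$ can carry a prescribed coincidence pattern, shows that with positive probability $\dsA$ may be taken of size $|\dsA|\gs n(\log n)^{1/8-\eps}$ while the total contribution of all the $k\ge2$ terms, once $\gamma$ is fixed small enough, is at most half the linear term. One also checks $\|\Psi\|_{1}\ls1$; since $\Psi\ge0$ this is $\int\Psi=1+(\text{off-diagonal terms})$, and those are controlled by the same coincidence bookkeeping, so in fact $\int\Psi\es1$. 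Combining the three estimates, $\Ds\gs|\dsA|\gs n(\log n)^{1/8-\eps}\gs\log N\,(\log\log N)^{1/8-\eps}$, which is the claim.

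The main obstacle is precisely the combinatorial balancing of the previous paragraph: one must simultaneously make $|\dsA|$ as large as possible and keep the coincidence graph on $\dsA$ --- and, more stringently, the number of coincidence-rich $k$-tuples in $\dsA$, for every $k$ --- small enough that the geometric series $\sum_{k\ge2}\gamma^{k}(\cdots)$ is absorbed into the linear term. The exponent $1/8$ is the exact output of optimizing this trade-off; the loss stems from the fact that a single coincidence constrains one of the three coordinates, together with the number of ways a $k$-tuple of shapes can be resolved into generic blocks. It is exactly this step that is reworked by Bilyk and Lacey, and refined further in the present paper, in order to gain a power of $\log N$ itself rather than only of $\log\log N$.
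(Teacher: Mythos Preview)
The paper does not prove this theorem. It is stated purely as historical background, with a citation to Beck's 1989 paper, and the only gloss the paper offers is that Beck ``laid the groundwork for combining Hal\'asz' approach to graph theory and probability theory in three dimensions.'' There is therefore no proof in the paper to compare your proposal against.

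Your sketch is a reasonable high-level account of Beck's strategy and is fully consistent with that one-line description: a Riesz product built from $\rsf$-functions, the coincidence obstruction that first appears in $d=3$, and a random selection of shapes so that the coincidence contributions remain under control. As a standalone argument, however, it is only an outline. The decisive step --- that one can take $|\dsA|\gs n(\log n)^{1/8-\eps}$ while all higher-order terms in the expansion of $\Psi$ are absorbed by the linear term and while $\|\Psi\|_1\ls1$ --- is asserted rather than carried out, and the exponent $1/8$ is the output of a rather delicate count in Beck's original paper, not of a generic Tur\'an/deletion bound. If the intent is to point the reader to Beck, the sketch serves; if it is meant as a proof, the combinatorics still have to be filled in.
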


For the sake of completeness one needs to add that an analogue of Theorem~\ref{thm:main} for arbitrary dimension $d\geq4$ was proven by Bilyk and Lacey together with A.~Vagharshakyan in \cite{BilOnt08d}. Within their paper they showed that the exponent of the logarithm in Roth's theorem can be increased to $(d-1)/2+\eta_d$ with an (unspecified) $\eta_d>0$. Due to the transition to higher dimensions and  to simplification reasons several arguments were refined and the overall strategy was slightly changed in comparison to the three-dimensional case. Apart from the increasing combinatorial effort this is one of the main reasons why the same line of reasoning as in the proof of Theorem~\ref{thm:main} would not (yet) work in higher dimensions. This might  be an interesting subject to be investigated in the future.

The author would also like to mention that a new proof for the lower bound of the star discrepancy of the first $N$ points of a sequence in the unit interval has recently been discovered by G. Larcher, see \cite{LarOnt15}, and has been slightly improved upon in \cite{LarAni16}, which transfers to two-dimensional point sets by a result from \cite{KuiUni74}.

The second section is dedicated to briefly describe the main ideas of Hal{\'a}sz' proof of Theorem~\ref{thm:schmidt} as well as to explain why his strategy cannot be directly extended to higher dimensions. This serves as an incentive to present the result of Bilyk and Lacey, i.e. Theorem~\ref{thm:main} without the specific bound for $\eta$, in Section~\ref{sec:bylik}, as they incorporate these ideas and provide the tools to fill the aforementioned gaps. We focus on one of these tools, the so-called \emph{Littlewood-Paley inequalities}, in Section~\ref{sec:lp} since they play an integral role in our proof. Finally, in Section~\ref{sec:main},  we carefully estimate the $L^1$-norm of a certain auxiliary function $\Psinsd$ which already appeared in \cite{BilOnt083}. This, in turn,  contributes the crucial bound for $\eta$ and thus completes the proof of Theorem~\ref{thm:main}.

\section{Hal{\'a}sz' proof of Theorem~\ref{thm:schmidt}}
\label{sec:halasz}

The essential idea behind this proof is to choose an auxiliary function $\Phi$ in such a way that it is complicated enough to recapture the overall structure of $D_N$ well, while, on the other hand, it remains relatively easy to handle. More precisely, one constructs $\Phi$ such that $\|\Phi\|_1\leq2$ and $\langle\Phi,D_N\rangle\geq c\log N$ for some $c>0$ since then, by duality,
\begin{equation*}
 2\Ds=2\|D_N\|_{\infty}\geq\langle\Phi,D_N\rangle\geq c\log N.
\end{equation*}
This behaviour can be achieved by using  sums of signed Haar functions.
\begin{definition}
 \label{def:haarfunction}
 Let $\cD$ denote the class of dyadic intervals, i.e.
 \begin{equation*}
  \cD=\{[a2^{-k},(a+1)2^{-k}):~k\in\bN \text{ and }0\leq a<2^{k}\}.
 \end{equation*}
Furthermore, we subdivide each $J\in\cD$ into a left and a right half, $J_{l}$ and $J_{r}$,  respectively, and define the \emph{one-dimensional Haar function} as $h_J=-\mathds{1}_{J_l}+\mathds{1}_{J_r}$. In higher dimensions $d\geq2$ we take a dyadic rectangle $R=J_1\times J_2\times\cdots\times J_d\in\cD^{d}$ and $x=(x_1,x_2,\ldots,x_d)\in\I^{d}$ and set
\begin{equation*}
 h_{R}(x)=h_{J_1}(x_1)h_{J_2}(x_2)\cdots h_{J_d}(x_d).
\end{equation*}
\end{definition}

One of the main advantages of working in this function system is that  products of Haar functions again yield Haar functions in some cases. This is indicated in the following lemma, see \cite{BilOnt083}.
\begin{lemma}[Product rule]
\label{lemma:productrule}
 Let $R_1, R_2,\ldots,R_k\in\cD^{d}$ with non-empty intersection. If, additionally, the $t$-th coordinates of all rectangles are mutually different for all $1\leq t\leq d$, then
 \begin{equation*}
  h_{R_1}h_{R_2}\cdots h_{R_k}=\sigma h_{S}, \quad\text{where }S=R_1\cap\cdots \cap R_k\text{ and }\sigma\in\{-1,+1\}.
 \end{equation*}
\end{lemma}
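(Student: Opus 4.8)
The plan is to peel off the dimensions one at a time and then reduce everything to a statement about nested dyadic intervals on the line. Writing $R_i=J_i^{(1)}\times\cdots\times J_i^{(d)}$ with $J_i^{(t)}\in\cD$, the definition of $h_R$ gives the factorisation
\begin{equation*}
 h_{R_1}h_{R_2}\cdots h_{R_k}=\prod_{t=1}^{d}\bigl(h_{J_1^{(t)}}h_{J_2^{(t)}}\cdots h_{J_k^{(t)}}\bigr),
\end{equation*}
while $S=S^{(1)}\times\cdots\times S^{(d)}$ with $S^{(t)}:=J_1^{(t)}\cap\cdots\cap J_k^{(t)}$; since $R_1\cap\cdots\cap R_k=S^{(1)}\times\cdots\times S^{(d)}$ is non-empty by hypothesis, each factor $S^{(t)}$ is non-empty. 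So it suffices to treat the one-dimensional situation: if $J_1,\dots,J_k\in\cD$ are \emph{pairwise distinct} and $J_1\cap\cdots\cap J_k\neq\emptyset$, then $h_{J_1}\cdots h_{J_k}=\sigma\,h_{J_1\cap\cdots\cap J_k}$ for some $\sigma\in\{-1,+1\}$; the $d$-dimensional sign is then just the product of the $d$ one-dimensional ones.

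For the line, I would first invoke the standard dichotomy that any two dyadic intervals are either disjoint or nested. As the $J_i$ have a common point they are pairwise nested, and as they are pairwise distinct their lengths are pairwise distinct, so after relabelling $J_1\supsetneq J_2\supsetneq\cdots\supsetneq J_k$ and hence $J_1\cap\cdots\cap J_k=J_k=:S$. The key observation is that for $i<k$ the set $J_k$ is a dyadic subinterval of $J_i$ of strictly smaller length, so it lies entirely inside one of the two halves $(J_i)_l,(J_i)_r$; therefore $h_{J_i}$ is constant on $J_k$, equal to $-1$ or $+1$. Since $h_{J_k}=-\mathds{1}_{(J_k)_l}+\mathds{1}_{(J_k)_r}$ vanishes off $J_k$, the whole product $h_{J_1}\cdots h_{J_k}$ is supported on $J_k=S$, exactly as $h_S$ is. Putting $\sigma:=\prod_{i=1}^{k-1}\bigl(h_{J_i}|_{J_k}\bigr)\in\{-1,+1\}$, we obtain $h_{J_1}\cdots h_{J_k}=\sigma\,h_{J_k}=\sigma\,h_S$ pointwise everywhere.

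I do not expect a genuine obstacle: once the dyadic nesting structure is in play the argument is essentially bookkeeping of signs. The only two points that each deserve a careful line are the disjoint-or-nested dichotomy for $\cD$ and the fact that a strictly smaller dyadic subinterval of $J_i$ sits inside a single dyadic half of $J_i$, both of which are immediate from Definition~\ref{def:haarfunction}. A minor cosmetic point worth settling first is the reading of ``mutually different'', which I take to mean pairwise distinct within each fixed coordinate $t$; this is precisely what makes the chain $J_1\supsetneq\cdots\supsetneq J_k$ strict and thereby identifies $S$ with the smallest interval in the chain.
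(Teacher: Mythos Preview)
Your argument is correct: the coordinate-wise factorisation together with the disjoint-or-nested dichotomy for dyadic intervals reduces everything to signs, exactly as you describe. Note, however, that the paper does not actually prove Lemma~\ref{lemma:productrule}; it merely states it with a reference to \cite{BilOnt083}, so there is no in-paper proof to compare against. Your proof is the standard one that underlies the cited result.
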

Let us now set 
\begin{equation}
\label{eqn:halaszrfunction}
 f_k=\sum_{R=J_1\times J_2\in\cD^2,|R|=2^{-n}, |J_1|=2^{-k}}\eps_{R}h_{R},\qquad 0\leq k\leq n,
\end{equation}
for a specific choice of signs $\eps_{R}$ which we do not want to specify here, and where $n$ is chosen such that $2^{n-2}\leq N<2^{n-1}$. Subsequently, we define $\Phi$ as the Riesz product
\begin{equation*}
 \Phi=\prod_{k=0}^{n}(1+\gamma f_k)-1 = \gamma \sum_{k=0}^{n}f_k+\Phi_{>n},\quad \gamma\in(0,1),
\end{equation*}
where $\Phi_{>n}$ contains all sums of products of functions of the form (\ref{eqn:halaszrfunction}). The key observation is that in dimension $d=2$ two or more \emph{hyperbolic} dyadic rectangles (i.e. they share the same volume) cannot coincide in any of their coordinates and, thus, their product is a Haar function again as a result of the product rule. The upper bound on the norm $\|\Phi\|_1$ can now be easily obtained with the help of Lemma~\ref{lemma:productrule} and the lower bound for $\langle D_n,\Phi\rangle$ follows from a special choice of coefficients $\eps_R$ and a standard argument involving the product rule again (see, e.g.,~\cite{MatGeo99}).

Observe that the \emph{key observation} from above deprives us of the possibility to repeat this proof verbatim for $d\geq3$. Indeed, already in dimension 3 the length of one coordinate of a hyperbolic rectangle does not fully specify the lengths of the other two, and, hence, \emph{coincidences} may occur.

\section{An outline of the strategy behind the existence result}
\label{sec:bylik}
In order to make the machinery of Hal{\'a}sz work in dimension $d=3$ and in order to improve upon Beck's result, Theorem~\ref{thm:beck}, Bilyk and Lacey had to modify the auxiliary function  on the one hand, and used more involved analytical tools adjusted to it, on the other. Also, they had to make up for the shortfall of the product rule in certain cases, as stated in the above paragraph. Since this is the part on which this paper emphasizes,  this is dealt with in the next section. We shall now turn to the construction of our auxiliary function.
\begin{definition}
 \label{def:rademacher}
 For $n\in\bN$ let
 \begin{equation*}
  \dsH_n=\left\{ \vr=(r_1,r_2,r_3)\in\bN^3:~|\vr|:=r_1+r_2+r_3=n \right\},
 \end{equation*}
 where the letter ``$\dsH$'' is used to resemble the term \emph{hyperbolic}. Two or more hyperbolic vectors have a \emph{coincidence} if their entries agree in one coordinate and are said to be \emph{strongly distinct} in the other case. Furthermore, for $\vr\in\dsH_n$ we define the set $\cD^{3}_{\vr}=\{J_1\times J_2\times J_3\in\cD^3:|J_t|=2^{-r_t}\}$ and, subsequently, call the function
 \begin{equation*}
 f_{\vr}=\sum_{R\in\cD_{\vr}^3}\alpha(R)h_R,\qquad\alpha(R)\in\{-1,1\}
\end{equation*}
an $\rsf$-\emph{function} with parameter $\vr\in\dsH_n$. Naturally, they generalize (\ref{eqn:halaszrfunction}).
\end{definition}
\begin{remark}
 \label{rem:rademacher}
These functions have mean zero and $f_{\vr}^2=\mathds{1}_{\I^3}$. Moreover, the product $f_{\vr} f_{\vs}$ gives an $\rsf$-function if $\vr,\vs\in\dsH_n$ are strongly distinct, as a consequence of the product rule. Also, products of two or more $\rsf$-functions have mean zero if the maximum of the entries of the underlying vectors is unique in some coordinate.
\end{remark}
For the rest of this paper we write $A\ls B$ if there exists an absolute constant $c$ independent of $N$ such that $A\leq c B$. Correspondingly,  $A\es B$ indicates equality up to a multiplicative constant. Furthermore, we fix $n\es\log N$ as in Section~\ref{sec:halasz} and set 
\begin{equation*}
 q=n^{\eps},
 \quad
 \rho=q^{1/2}n^{-1},
 \quad 
 \trho=aq^{b}n^{-1}=aq^{b-1/2}\rho,
 \qquad 
 a,b,\eps>0.
\end{equation*}
As a matter of fact, we work with $q$ as if it was an integer, since its fractional part is of negligible size. Moreover, the proof of (\ref{eqn:psil1}) 
dictates $b<1/4$. Here, too, we shall continue our calculations with $b=1/4$ ,wich is compensated for by using a strict inequality sign for $\eps$ in Theorem~\ref{thm:main}. Additionally, we partition the set $\{1,2,\ldots,n\}$ into $q$ equal parts $I_1,\ldots,I_q$ , $I_v=\{(v-1)n/q+1,(v-1)n/q+2,\ldots,vn/q\}$, and group  hyperbolic vectors  into collections $\dsA_v$, $1\leq v \leq q$, according to their first coordinate:
\begin{equation*}
 \dsA_v:=\left\{\vr=(r_1,r_2,r_3)\in\dsH_n:~r_1\in I_v\right\}.
\end{equation*}
The Riesz product we intend to consider is now defined as
\begin{equation*}
 \Psi=\prod_{v=1}^{q}(1+\trho F_v)=1+\Psisd+\Psinsd,\qquad F_{v}=\sum_{\vr\in\dsA_{v}}f_{\vr},
\end{equation*}
where $\Psisd$ comprises the sums of products of strongly distinct collections of $\rsf$-functions and $\Psinsd$ contains  the rest.

The main ingredient of the proof of Theorem~\ref{thm:main} is the  lemma below.
\begin{lemma}
 \label{lemma:bilykmainlemma}
 One has the following estimates:
 \begin{align}
 \label{eqn:psil1}
  \|\Psi \|_1 &\ls 1, \\
  \label{eqn:psinsdl1}
  \|\Psinsd\|_1&\ls1, \\
  \label{eqn:psisdl1}
  \|\Psisd\|_1&\ls1,
 \end{align}
where we require $b<1/4$, and $\eps<\min\{1/3,1/(1+12b)\}$ for (\ref{eqn:psil1}) and $\eps<(8-\sqrt{41})/23$ for (\ref{eqn:psinsdl1}) and (\ref{eqn:psisdl1}), respectively.
\end{lemma}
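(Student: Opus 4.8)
\textbf{Overall plan.} The three estimates are really one estimate proved in layers: the bound \eqref{eqn:psil1} on the full Riesz product $\|\Psi\|_1$ is the coarsest, \eqref{eqn:psinsdl1} isolates the ``bad'' (non-strongly-distinct) part $\Psinsd$, and \eqref{eqn:psisdl1} on $\Psisd$ then follows by the triangle inequality from $\Psi = 1 + \Psisd + \Psinsd$ together with the other two. So the plan is: (i) expand $\Psi$ as a sum over subsets $V \subseteq \{1,\ldots,q\}$ of terms $\trho^{|V|} \prod_{v\in V} F_v$, and split each such product according to whether the chosen $\rsf$-functions are strongly distinct or not; (ii) control the strongly-distinct contributions by the product rule (Remark~\ref{rem:rademacher}), which collapses a product of strongly distinct $\rsf$-functions into a single $\pm h_S$ and hence makes those pieces behave almost orthogonally, giving an $L^2$-type bound that one upgrades to $L^1$ via the Littlewood--Paley inequalities of Section~\ref{sec:lp}; (iii) bound the genuinely non-strongly-distinct part $\Psinsd$ by a careful combinatorial accounting of how many coincidences can occur, which is where the numeric constraint $\eps < (8-\sqrt{41})/23$ must come from.

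\textbf{Key steps in order.} First I would record the cardinality estimates: $|\dsH_n| \es n^2$, and more importantly $|\dsA_v| \es n^2/q$ (since fixing $r_1 \in I_v$ leaves $\es n$ choices for $r_2$, each determining $r_3$, and $|I_v| = n/q$). Since $f_{\vr}^2 = \mathds{1}$, one has $\|F_v\|_2^2 = |\dsA_v| \es n^2/q$ after expanding and using that distinct $\rsf$-functions with the same first coordinate are automatically strongly distinct in that coordinate, hence their cross terms integrate to zero. Next, for \eqref{eqn:psil1}, the standard Riesz-product argument: because $\trho F_v$ has mean zero and the $F_v$ live on disjoint ``first-coordinate blocks'', most cross terms in $\prod_v(1+\trho F_v)$ vanish, and what survives is controlled once $\trho \|F_v\|_\infty$-type quantities are small; tracking the exponents $q = n^\eps$, $\trho \es q^{1/4} n^{-1}$ through this computation is exactly what forces $\eps < \min\{1/3, 1/(1+12b)\}$ with $b = 1/4$. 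For \eqref{eqn:psinsdl1} I would fix a subset $V$ and a coincidence pattern among $\{\vr^{(v)}\}_{v\in V}$, estimate the $L^1$ (or $L^2$) norm of the corresponding sub-sum using the product rule on the strongly-distinct coordinates while paying a price on the coinciding ones, and then sum over all patterns and all $V$; the worst case is a chain of coincidences, and optimizing the resulting geometric-type sum over the free parameters is what produces the quadratic in $\eps$ whose root is $(8-\sqrt{41})/23$. Finally \eqref{eqn:psisdl1} is immediate: $\|\Psisd\|_1 \le \|\Psi\|_1 + 1 + \|\Psinsd\|_1 \ls 1$.

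\textbf{Main obstacle.} The routine parts are the counting lemmas and the Riesz-product bookkeeping for \eqref{eqn:psil1}. The real difficulty — and the heart of the paper, as the introduction signals — is \eqref{eqn:psinsdl1}: one must show that the accumulated contribution of \emph{all} products containing at least one coincidence is still $O(1)$ in $L^1$. This requires (a) a sharp combinatorial bound on the number of tuples of hyperbolic vectors exhibiting a prescribed coincidence structure, (b) applying the Littlewood--Paley inequalities to convert the available $L^2$/square-function control into an $L^1$ bound without losing more than logarithmically many factors, and (c) balancing these losses against the smallness of $\trho^{|V|}$ so that summing over $V$ and over coincidence patterns converges. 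Getting the constant in (a)–(c) as tight as possible is precisely what pins down the admissible range of $\eps$, and hence — after translating $\eps$ back through the relation $\eta \es \eps/(\text{something})$ in Section~\ref{sec:main} — the value $\eta < 1/(32+4\sqrt{41})$ in Theorem~\ref{thm:main}. I would therefore spend most of the effort on a clean induction on $|V|$ (or on the number of coincidences) that makes the exponent arithmetic transparent.
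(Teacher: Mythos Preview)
Your overall architecture is right --- in particular, deducing \eqref{eqn:psisdl1} from the other two via the triangle inequality is exactly what the paper does --- but two genuine ingredients are missing, one for each of \eqref{eqn:psil1} and \eqref{eqn:psinsdl1}.

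For \eqref{eqn:psil1}: this is \emph{not} a ``standard Riesz-product argument''. In dimension three the product rule fails for non-strongly-distinct tuples, so the usual positivity/mean-zero tricks do not apply to $\Psi$ directly. The paper (following \cite{BilOnt083}) needs Littlewood--Paley theory together with exponential Orlicz classes and conditional expectation arguments to control $\|\Psi\|_1$; the constraint $\eps<\min\{1/3,1/(1+12b)\}$ emerges only after tracking constants through those estimates, not from elementary bookkeeping.

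For \eqref{eqn:psinsdl1}: your plan to ``fix a coincidence pattern, estimate, sum'' is the right outline but skips the mechanism that makes the sum converge and produces the specific constant. The paper encodes coincidence patterns as two-colored admissible graphs, then decomposes each such graph into its a.c.\ components $G_1,\ldots,G_l$ and applies H\"older to pass to $\|\sprod(\dsX(G_j))\|_{lq^{1/2}}$ (Lemma~\ref{lemma:beckgain}). This move --- raising the exponent with the number of components --- is essential, and the norm estimate itself is obtained by an \emph{iterated} Littlewood--Paley/triangle-inequality algorithm that distinguishes generalized trees from graphs with bicolored cycles (the latter pick up an extra ``Beck gain'' factor $l^{-1/2}q^{1/4}n^{-1/2}$ per cycle). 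Your intuition that ``chains'' (i.e.\ trees) are the worst case is correct, but you then need Cayley's formula to count them and a Lagrange-type optimization (Lemma~\ref{lemma:lagrange}) to control the product $\prod_j |V_j|^{|V_j|-2}$. The number $(8-\sqrt{41})/23$ does not come from a quadratic in $\eps$; it arises by splitting the sum over $l$ at a threshold $\at v$, bounding the two pieces separately, and optimizing over $\at\in(0,1/2)$ --- the optimum is $\ato=(\sqrt{41}-5)/4$. A straight induction on $|V|$ will not expose this structure; you need the full $(v,l)$-sum and the tree/cycle dichotomy.
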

The plain proof  of this lemma without the bounds for $\eps$ requires Littlewood-Paley theory, properties of exponential Orlicz classes as well as conditional expectation arguments and can be found in \cite{BilOnt083}. For a detailed derivation of the bound for $\eps$ for (\ref{eqn:psil1}) the reader is encouraged to study the author's PhD-thesis \cite{PucDis17}. The proof of (\ref{eqn:psinsdl1}) is dealt with in Section~\ref{sec:main}.

Let us remark that by choosing $\Psisd$ as our auxiliary function the product rule (Lemma~\ref{lemma:productrule}) is applicable and, hence, similar  arguments as those used in Section~\ref{sec:halasz} (see \cite{BilOnt083}) for relatively moderate values of $\eps$ (see \cite{PucDis17}) lead to the estimate
\begin{equation*}
 \langle D_N,\Psisd\rangle \gs a q^{b}n \es (\log N)^{1+\eps/4}.
\end{equation*}
Thus, considering (\ref{eqn:psisdl1})  we obtain our main result by Hölder's inequality. Notice that the value of $\eps$ directly determines that of $\eta$ in Theorem~\ref{thm:main}. Therefore, it is essential to meticulously keep trace of $\eps$ while proving (\ref{eqn:psinsdl1}).

\section{A brief note on the Littlewood-Paley inequalites}
\label{sec:lp}
Roth's proof heavily relies on Parseval's identity and orthogonality which, of course, are distinctive features of $L^{2}$. To apply similar methods in other functions spaces -- above all, in $L^{p}$ spaces with $1<p<\infty$ -- we require a powerful tool from harmonic analysis, the so-called \emph{Littlewood-Paley inequalities}. Since most of the proofs from this paper extensively make  use of these inequalities, this section is dedicated to provide a brief introduction to this topic tailored to our requirements. More information can be found  in~\cite{BurSha87,SteHar93,WanSha91}, for instance. 

Let us consider the case $d=1$ first. For suitable functions $f$ defined on the unit interval the \emph{dyadic square function} is given by
\begin{equation*}
 \cS f=\Bigg[| \dsE f|^2+\sum_{k=0}^{\infty}\Bigg( \sum_{J\in\cD,|J|=2^{-k}} \frac{\langle f, h_{J}\rangle}{|J|}h_{J}\Bigg)^{2} \Bigg]^{1/2}.
\end{equation*}
If we choose $f$ to be of the form $f=\sum_{J\in\cD}\alpha(J)h_{J}$ this simplifies to
\begin{equation*}
 \cS f=\Bigg[ \sum_{k=0}^{\infty}\Bigg( \sum_{J\in\cD, |J|=2^{-k}} \alpha(J)h_{J} \Bigg)^{2}  \Bigg]^{1/2}=\Bigg[ \sum_{J\in\cD}\alpha^{2}(J)\mathds{1}_{J} \Bigg]^{1/2}.
\end{equation*}
Observe that Parseval's identity may be reformulated as $\|f \|_{2}=\| \cS f\|_2$. Hence, the Littlewood-Paley inequalities as stated in the proposition below (cf. \cite{WanSha91}) can be seen as its extension to other $L^{p}$ spaces.

\begin{proposition}[Littlewood-Paley inequalites]
 For all $1<p<\infty$ there exist positive constants $A_p\geq 1+1/\sqrt{p-1}$ and $B_p\ls\sqrt{p}$ for $p\geq2$ such that
 \begin{equation*}
  A_p\|\cS f\|_{p}\leq \|f\|_{p}\leq B_p\|\cS f\|_P.
 \end{equation*}
\end{proposition}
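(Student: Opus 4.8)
The plan is to recognize $\cS f$ as the square function of the dyadic martingale generated by $f$ and then to invoke the Burkholder--Davis--Gundy inequalities together with Doob's maximal inequality, keeping careful track of how the constants depend on $p$.

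First I would set up the dyadic filtration: for $k\geq 0$ let $\cF_k$ be the $\sigma$-algebra generated by the dyadic intervals of length $2^{-k}$, and put $f_k=\dsE[f\mid\cF_k]$ with the convention $f_{-1}=0$. Then $(f_k)_{k\geq0}$ is a martingale whose differences are exactly the Littlewood--Paley blocks,
\[
 d_k:=f_k-f_{k-1}=\sum_{J\in\cD,\,|J|=2^{1-k}}\frac{\langle f,h_J\rangle}{|J|}\,h_J\quad(k\geq1),\qquad d_0=\dsE f,
\]
so that $\cS f=\bigl(\sum_{k\geq0}d_k^2\bigr)^{1/2}$ is precisely the martingale square function; in particular the case $p=2$ is Parseval's identity, giving $A_2=B_2=1$. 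Once this identification is in place, the two displayed inequalities are the classical $L^p$ square-function estimates for martingales. For the qualitative version one combines the Burkholder--Gundy inequality $\|f^{\ast}\|_p\approx\|\cS f\|_p$ (valid for every $0<p<\infty$, with $f^{\ast}=\sup_k|f_k|$) with Doob's inequality $\|f\|_p\leq\|f^{\ast}\|_p\leq\frac{p}{p-1}\|f\|_p$ for $1<p<\infty$; chaining these yields $\|f\|_p\approx\|\cS f\|_p$ with constants depending only on $p$.

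The only delicate point is the precise shape of $A_p$ and $B_p$, and this is where I expect the real obstacle to lie: the $L^p$-boundedness itself is soft, but the growth $B_p\ls\sqrt p$ for $p\geq2$ -- which is exactly what makes the exponential Orlicz estimates later in the paper go through -- is not. I would obtain it by bootstrapping from the $L^2$-identity: combined with the sub-Gaussian (exponential Orlicz) bound for martingale sums this produces, on the dyadic range, a Khintchine-type inequality $\|f\|_p\ls\sqrt p\,\|\cS f\|_p$; alternatively one runs Burkholder's good-$\lambda$ argument. The companion constant $A_p$ of the stated form, together with the optimal dependence on $p$, is the content of the sharp square-function inequalities of Burkholder and of Wang \cite{WanSha91}, so I would simply import these (using \cite{SteHar93} for the analytic reformulation and \cite{BurSha87} for the martingale background) rather than reprove them from scratch, which would be out of proportion with a brief note.
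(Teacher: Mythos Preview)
The paper does not prove this proposition at all; it is stated as a known result with a reference to \cite{WanSha91} (and the surrounding discussion points to \cite{BurSha87,SteHar93} for background). Your proposal does essentially the same thing---you identify $\cS f$ with the dyadic martingale square function, sketch the Burkholder--Davis--Gundy/Doob route, and then import the sharp constants from Wang and Burkholder---so your approach is entirely in line with how the paper handles it, only more explicit about the underlying mechanism.
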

The key observation is given by the fact that there is a version of the Littlewood-Paley inequalities (with exactly the same constants $A_p$ and $B_p$) which is valid for Hilbert space-valued functions, where the integrals involved are understood as Bochner integrals. This version allows us to apply the Littlewood-Paley inequality in, say, the first coordinate while keeping the other coordinates fixed in the sense of vector-valued coefficients. For full details of this discussion and for an illustrative example referring to Roth's proof the reader is once again advised to consult \cite{BilOnr14,BilOnt083}.

\section{The study of coincidences of hyperbolic vectors}
\label{sec:main}

The structure of coincidences within collections of hyperbolic vectors can probably be best explained by two-colored graphs. These are triples $G=(V(G),E_2,E_3)$, where $V(G)\subseteq\{1,2,\ldots,q\}=:[q]$ denotes the set of vertices and the symmetric subsets of $V(G)\times V(G)\setminus\{(k,k):k\in V(G)\}$, $E_2$ and $E_3$,  are the edge sets of color 2 and 3, respectively. Additionally, we say that $Q\subseteq V(G)$ is a \emph{clique of color} $j$ iff it is subject to
 \begin{equation*}
  \forall v,w\in Q,~v\neq w:~(v,w)\in E_j
 \end{equation*}
and $Q$ is maximal with this property. Here, maximality is understood in the sense that if $\tilde{Q}\supseteq Q$ is subject to the above condition, then $\tilde{Q}=Q$. Notice that edges serve to indicate that two vectors have a coincidence and its color states the coordinate. Hence, vertices from one clique of, say, color 2 shall correspond to a collection of hyperbolic vectors which have a coincidence in the second coordinate.
\begin{definition}
  \label{def:admissible}
 A two-colored graph $G$ is called \emph{admissible} if the following four conditions are fulfilled:
 \begin{enumerate}[(i)]
  \item Each $E_j$ decomposes into a union of cliques,
  \item If $Q_2$ and $Q_3$ are cliques of color 2 and 3, respectively, then $|Q_2\cap Q_3|\in\{0,1\}$.
  \item Every vertex is contained in at least one clique.
  \item Cliques of the same color are disjoint.
 \end{enumerate}
 Moreover, we subdivide the class of admissible \emph{connected}  (a.c.) graphs on a given vertex set $V$ further into $\cT(V)$ and $\cC(V)$. Here, $\cT(V)$ comprises all a.c.\,graphs $G$ defined on $V$ such that either
 \begin{enumerate}[(i)]
  \item $G$ is a tree or
  \item if $G$ contains a cycle then this cycle is composed of edges of one color only,
 \end{enumerate}
   and $\cC(V)$ contains the rest. That is, graphs in  $\cC(V)$ contain cycles composed of edges of both colors. We shall refer to such cycles as \emph{bicolored}.
\end{definition}
Observe that if we regard the individual cliques as vertices themselves, the elements of $\cT(V)$ admit of a tree representation. This is why we refer to them as \emph{generalized trees} in all that follows.

A bound for the number of admissible graphs on a given vertex set is given in the lemma below.
\begin{lemma}
 \label{lemma:numberofgraphs}
 Let $V\subseteq[q]$. The number of admissible graphs on $V$ is bounded by $c|V|^{2|V|}$, $c>0$. For generalized tree graphs this number reduces to $2^{|V|}|V|^{|V|-2}$.
\end{lemma}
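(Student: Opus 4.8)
The plan is to reduce both counts to classical combinatorial quantities. For the first bound, observe that conditions (i) and (iv) of admissibility force $(V,E_j)$ to be a disjoint union of complete graphs for each color $j\in\{2,3\}$: once the maximal $j$-cliques are pairwise vertex-disjoint, a short argument identifies each of them with a connected component of $(V,E_j)$. Hence an admissible graph on $V$ is completely determined by the pair $(\pi_2,\pi_3)$ of partitions of $V$ in which the non-singleton blocks of $\pi_j$ are exactly the color-$j$ cliques of size at least two, since $E_j=\bigcup_{B\in\pi_j}\binom{B}{2}$. Therefore the number of admissible graphs on $V$ is at most $B_{|V|}^{\,2}$, where $B_m$ denotes the $m$-th Bell number, and the elementary estimate $B_m\leq m^m$ — the map that sends a partition of $[m]$ to the function $i\mapsto\min(\text{block of }i)$ is injective into $[m]^{[m]}$ — yields the bound $|V|^{2|V|}$, so one may even take $c=1$.

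For the second statement the plan is to construct an injection of $\cT(V)$ into the set of pairs (labeled tree on $V$, $2$-coloring of its edge set). Since there are $|V|^{|V|-2}$ labeled trees on $V$ by Cayley's formula and each has $|V|-1$ edges, that set has $|V|^{|V|-2}2^{|V|-1}\leq 2^{|V|}|V|^{|V|-2}$ elements, which is the desired bound. We may assume $m:=|V|\geq 2$, the case $m=1$ being trivial. Given $G\in\cT(V)$, let $\mathcal{Q}$ be the family of maximal cliques of $G$ with at least two vertices; connectedness of $G$ ensures that every vertex lies in some member of $\mathcal{Q}$. For $Q\in\mathcal{Q}$ with least vertex $v_0$, let $S_Q$ be the star with edges $\{v_0,w\}$, $w\in Q\setminus\{v_0\}$, set $T_G:=\bigcup_{Q\in\mathcal{Q}}S_Q$, and color every edge of $T_G$ with the color of the clique whose star contains it (this clique is unique, as an edge lying in $S_Q\cap S_{Q'}$ would force $Q=Q'$).

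Next one checks that $T_G$ is a labeled tree on $V$. The clique-intersection graph of $G$ is connected — a $G$-path joining two vertices passes through a chain of cliques, each meeting the next in a common vertex — and this gives connectedness of $T_G$ (and, the same way, of the bipartite incidence graph $B(G)$ on the disjoint union $V\sqcup\mathcal{Q}$). Acyclicity of $B(G)$ is exactly where the hypothesis $G\in\cT(V)$ is used: a $4$-cycle in $B(G)$ would make two distinct cliques share two vertices, contradicting condition (ii) or (iv), while a longer cycle passes through cliques whose colors are forced to alternate (consecutive cliques are distinct and share a vertex, so have different colors), and reading off the corresponding edges of $G$ produces a bicolored cycle, which is excluded. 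Thus $B(G)$ is a tree, so $\sum_{Q\in\mathcal{Q}}(|Q|-1)=|V|-1$; as the stars $S_Q$ are pairwise edge-disjoint, this is the number of edges of $T_G$, whence $T_G$ is a tree. Finally, $G\mapsto(T_G,\text{coloring})$ is injective: for each color $j$ the $j$-cliques are pairwise vertex-disjoint by (iv), so the subgraph of $T_G$ carrying the color-$j$ edges is the disjoint union of the stars $S_Q$ over $j$-cliques $Q$, and its connected components of size at least two are precisely the vertex sets of the $j$-cliques of $G$; hence $E_j$ — and therefore $G$ — is recovered from $(T_G,\text{coloring})$.

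I expect the main obstacle to be the proof that $B(G)$, equivalently $T_G$, is a tree: its acyclicity is the single point at which membership in $\cT(V)$, rather than merely in the class of admissible connected graphs, is needed, and it comes down to turning a hypothetical cycle of the bipartite incidence graph into a bicolored cycle of $G$ while carefully tracking the alternation of colors around it. The remaining steps are routine bookkeeping.
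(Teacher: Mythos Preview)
Your proof is correct. Both bounds are established by the same core ideas the paper invokes---pairs of partitions for the general bound, and Cayley's formula together with a $2$-coloring of the edges for the generalized trees---so in spirit your approach matches the paper's.

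The difference is one of detail rather than strategy. The paper simply cites \cite{BilOnr14} for the $|V|^{2|V|}$ bound and, for the tree case, appeals to Cayley's formula with the remark that generalized trees ``deviate from actual trees in a prescribed manner only.'' Your argument makes both steps self-contained: for the first you identify each admissible graph with a pair of set partitions and bound by $B_{|V|}^{2}\le |V|^{2|V|}$; for the second you construct an explicit injection $G\mapsto (T_G,\text{coloring})$ via the stars $S_Q$ and prove that $T_G$ is a tree by showing the bipartite vertex--clique incidence graph $B(G)$ is acyclic. This last step is exactly where membership in $\cT(V)$ is used, and your reduction of a long $B(G)$-cycle to a bicolored $G$-cycle is the rigorous content behind the paper's hand-wave. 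The injectivity argument (recovering the $j$-cliques as the nontrivial connected components of the color-$j$ subgraph of $T_G$) is clean and correct. So your proof is a fleshed-out version of the paper's sketch rather than a genuinely different route.
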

\begin{proof}
The first bound  is derived in \cite[p. 144]{BilOnr14} and the estimate for generalized trees  is better known as \emph{Cayley's  formula} without the additional factor $2^{|V|}$ which arises from choosing one of two colors for each edge. Since elements of $\cT(V)$ can deviate from actual trees in a prescribed manner only (see item (iv) of Definition~\ref{def:admissible}) this estimate continues to hold for generalized trees. Cayley's formula was initially shown by C.W. Borchardt. Four more  recent proofs can be found in the book~\cite{AigPro14}, for instance. 
\end{proof}
The connection to our problem can now be drawn via the functions
$$
 \sprod(\dsX(G))=\sum_{(\vr_{1},\vr_{2},\ldots,\vr_{{|V|}})\in\dsX(G)}f_{\vr_{1}}\cdots f_{\vr_{|V|}},
$$
where $G$ is an admissible graph, and
\begin{equation*}
 \dsX(G):=\Big\{(\vr_{1},\vr_{2},\ldots,\vr_{{|V|}})\in\prod_{v\in V}\dsA_v:~(v_1,v_2)\in E_j \Rightarrow r_{v_1}^{(j)}=r_{v_2}^{(j)}\Big\}.
\end{equation*}

The norms of these functions for a.c.\,graphs can be estimated as follows.
\begin{lemma}
 \label{lemma:beckgain}
 Let $G$ be an a.c.\,graph with vertex set $V$, $|V|\geq2$, comprising exactly $t$ disjoint bicolored cycles. For all $\eps<1/3$ and all $1\leq l\leq q$ we have
 \begin{align*}
  \trho^{|V|}\|\sprod(\dsX(G))\|_{lq^{1/2}}
  &\ls
  \min\Big\{
  l^{\frac{3}{2}}qn^{-\frac{1}{2}},
  l^{\frac{|V|}{2}}n^{1-\frac{|V|}{2}}
  \Big\}
  l^{-\frac{t}{2}} q^{\frac{t}{4}}n^{-\frac{t}{2}}\\
  &=: M_{|V|,l} l^{-\frac{t}{2}} q^{\frac{t}{4}}n^{-\frac{t}{2}}
 \end{align*}
\end{lemma}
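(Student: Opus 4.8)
The plan is to estimate $\|\sprod(\dsX(G))\|_{lq^{1/2}}$ by iterated applications of the vector-valued Littlewood-Paley inequalities, coordinate by coordinate, exactly in the spirit of Roth's argument as adapted by Bilyk and Lacey. First I would group the vertices of $G$ according to the clique structure: since each colour class decomposes into disjoint cliques (items (i) and (iv) of admissibility), the sum defining $\dsX(G)$ factors into independent choices of a common value $r^{(2)}$ on each colour-$2$ clique, a common value $r^{(3)}$ on each colour-$3$ clique, together with the free first coordinates $r_v^{(1)}\in I_v$ which, by the product rule (Remark~\ref{rem:rademacher}), turn each product $f_{\vr_1}\cdots f_{\vr_{|V|}}$ over a fixed choice into a single $\rsf$-function (up to sign) supported on the intersection rectangle. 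The square function of such a sum is then computed by summing $\mathds{1}$ over the supports, and applying the Littlewood-Paley inequality in each of the three coordinates in turn (keeping the others fixed, Hilbert-space valued) converts the $L^{lq^{1/2}}$ norm into an $L^2$-type count with a multiplicative loss of $B_{lq^{1/2}}^3\ls (lq^{1/2})^{3/2}$ from Proposition on the Littlewood-Paley constants.

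The heart of the matter is the combinatorial bookkeeping that produces the two competing bounds inside the minimum and, crucially, the gain factor $l^{-t/2}q^{t/4}n^{-t/2}$ for each bicolored cycle. I would argue as follows. Without any cycles, a generalized tree on $|V|$ vertices has exactly $|V|-1$ edges, so the number of \emph{independent} coordinate parameters among all the $r^{(j)}$ is controlled, and the counting of admissible choices gives on the one hand the ``many vertices'' bound $l^{|V|/2}n^{1-|V|/2}$ (each extra vertex beyond the first two costs a factor $\es (l/n)^{1/2}$, reflecting that a coincidence forced by a tree edge removes one free summation of length $\es n/q$ while Littlewood-Paley contributes $l^{1/2}q^{1/4}$ per coordinate and the $\trho^{|V|}$ prefactor supplies $q^{(b-1/2)|V|}$), and on the other hand the ``few vertices'' bound $l^{3/2}qn^{-1/2}$ which is the trivial estimate coming from $|\I^3|=1$ together with the total mass of a single layer. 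Each bicolored cycle, by condition (ii), forces an \emph{additional} coincidence constraint beyond what a spanning tree already imposes: closing a bicolored cycle of length $2s$ pins down one more pair of coordinates that would otherwise have been free, which saves a summation of length $\es n/q$ but costs the Littlewood-Paley and $\trho$ factors associated with that closing, and a careful accounting shows the net effect is precisely the factor $l^{-1/2}q^{1/4}n^{-1/2}$ per cycle. Since the $t$ cycles are disjoint, these gains multiply, giving $l^{-t/2}q^{t/4}n^{-t/2}$.

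Concretely, the steps in order: (1) reduce, via the clique decomposition and the product rule, to a sum of signed $\rsf$-functions and write down its square function as a sum of indicators over intersection rectangles; (2) apply the Hilbert-space-valued Littlewood-Paley inequality successively in coordinates $1$, $2$, $3$, picking up $B_{lq^{1/2}}^3 \ls (lq^{1/2})^{3/2}$; (3) bound the resulting $L^2$ quantity by counting, using that each tree edge and each cycle-closing edge imposes one coincidence and hence removes one summation of length $\es n/q$, and insert the prefactor $\trho^{|V|}=(aq^{1/4}n^{-1})^{|V|}$ with $b=1/4$; (4) assemble the two regimes into the minimum $M_{|V|,l}$ and factor out the per-cycle gain $l^{-t/2}q^{t/4}n^{-t/2}$; (5) check that the exponent arithmetic on $q$ closes exactly when $\eps<1/3$, which is where that hypothesis is consumed (it guarantees $q=n^{\eps}$ is small enough that the Orlicz/Littlewood-Paley losses $(lq^{1/2})^{3/2}$, with $l\le q$, do not overwhelm the $n^{-1/2}$ savings). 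I expect the main obstacle to be step (3)--(4): correctly attributing exactly one lost summation to each independent bicolored cycle (using admissibility condition (ii) to rule out degenerate overlaps of colour-$2$ and colour-$3$ cliques) and verifying that the bookkeeping of the $l$, $q$, and $n$ exponents matches the claimed $M_{|V|,l}\,l^{-t/2}q^{t/4}n^{-t/2}$ on the nose, rather than up to an unwanted power of $q$.
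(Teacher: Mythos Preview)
Your step (1) is where the argument breaks down. You invoke the product rule (Remark~\ref{rem:rademacher}) to collapse $f_{\vr_1}\cdots f_{\vr_{|V|}}$ into a single signed $\rsf$-function, but that remark requires the vectors to be \emph{strongly distinct}, and the whole point of $\sprod(\dsX(G))$ for a nontrivial admissible graph is that they are not: in each coordinate $j\in\{2,3\}$ where a clique lives, the corresponding one-dimensional factors satisfy $h_{J}h_{J'}\in\{0,\mathds{1}_J\}$ rather than $\pm h_{J\cap J'}$. Hence the product is not a Haar function in those coordinates, the square-function identity you write down in step (2) is not available, and the clean $L^2$ count of step (3) has nothing to feed on. The ``three global Littlewood--Paley applications costing $(lq^{1/2})^{3/2}$'' picture is therefore not the right model here.

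The paper's approach is structurally different: it is an \emph{iterative} algorithm that processes the graph one vertex at a time. At each stage one applies the Littlewood--Paley inequality in the \emph{first} coordinate only (this is legitimate because the first coordinates lie in pairwise disjoint intervals $I_v$, so no coincidences occur there), paying $l^{1/2}q^{1/4}$ and putting the sum over $r_1\in I_v$ under a square root; then one fixes a second coordinate by the triangle inequality, paying a factor $\leq n$; the hyperbolic constraint $|\vr|=n$ then pins down the third coordinate, so the corresponding $f_{\vr}$ has $|f_{\vr}|=1$ and can be removed. Taking the supremum over the $n/q$ values under the square root and the $n$ triangle terms gives the per-vertex cost $l^{1/2}q^{-1/4}n^{3/2}$; for a vertex adjacent (in $G$) to an already specified one, one coordinate is already fixed and only the Littlewood--Paley step is needed, saving a full power of $n$---this is what produces the second entry of the minimum. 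The gain from a bicolored cycle is that the last vertex on the cycle is \emph{completely} determined by the others (both non-first coordinates are forced), so neither step is needed for it, saving exactly $l^{1/2}q^{-1/4}n^{1/2}$ per cycle. Your counting heuristic gestures at this, but without the vertex-by-vertex peeling you have no mechanism that actually delivers the exponents.
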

\begin{proof}
 In \cite{BilOnt083} Bilyk and Lacey derive an algorithm for estimating the above norm. In short, they repeatedly apply the Littlewood-Paley inequality and/or the triangle inequality to successively specify all hyperbolic vectors. As $G$ is connected, some vertices might have one or all of its coordinates fully determined even earlier. 
  
  To provide a clearer picture of their argument let us consider the graph $G_0$ on three vertices associated to the first picture in Figure~\ref{fig:graphs}.
\begin{figure}[H]
  \begin{equation*}
  \begin{matrix}
   \vr&&\vs&&\vt \\ \hline
   r_1 && s_1 && t_1 \\
   r_2&=&s_2 &\neq& t_2 \\
   r_3 &\neq&s_3&=&t_3 
  \end{matrix}
  \qquad \qquad 
  \begin{matrix}
   \vr&&\vs&&\vt \\ \hline
   r_1 && s_1 && \mu\\
   r_2&=&s_2 &\neq&\nu \\
   r_3 &\neq&n-\mu-\nu&=& n-\mu-\nu
  \end{matrix}
 \end{equation*}
 \caption{Hyperbolic vectors associated to the graphs $G_0$ (left) and $\tilde{G}_0$ (right).}
 \label{fig:graphs}
\end{figure}
W.l.o.g. assume $t_1\in I_1$.  One application of the Littlewood-Paley inequality in the first coordinate  yields
\begin{multline*}
 \| \sprod(\dsX(G_0)) \|_{lq^{1/2}}=\Bigg\|  \sum_{(\vr,\vs,\vt)\in\dsX(G_0)}f_{\vr}f_{\vs}f_{\vt}\Bigg\|_{lq^{1/2}}\\
 \ls l^{\frac{1}{2}}q^{\frac{1}{4}} \Bigg\| \Bigg[  \sum_{\mu\in I_1} \Bigg| \sum_{\substack{(\vr,\vs,\vt)\in\dsX(G_0) \\ \vt=(\mu, t_2,t_3)}} f_{\vr}f_{\vs}f_{\vt}  \Bigg|^{2} \Bigg]^{\frac{1}{2}}  \Bigg\|_{lq^{1/2}}.
\end{multline*}
 Subsequently, we fix $t_2$ with the help of the triangle inequality
 \begin{equation*}
   \| \sprod(\dsX(G_0)) \|_{lq^{1/2}} \ls l^{\frac{1}{2}}q^{\frac{1}{4}} \sum_{\nu=1}^{n}  \Bigg\| \Bigg[  \sum_{\mu\in I_1} \Bigg| \sum_{\substack{(\vr,\vs,\vt)\in\dsX(G_0) \\ \vt=(\mu, \nu,t_3)}} f_{\vr}f_{\vs}f_{\vt}  \Bigg|^{2} \Bigg]^{\frac{1}{2}}  \Bigg\|_{lq^{1/2}}.
 \end{equation*}
Notice that $\vt$ is already fully specified, since its coordinates add up to $n$. Consequently, we can pull $f_{\vt}$ out of the sum, where it simplifies to 1.  Taking the supremum w.r.t. $\mu$ and $\nu$ then finally yields
\begin{equation*}
    \| \sprod(\dsX(G_0)) \|_{lq^{1/2}}  \ls l^{\frac{1}{2}} q^{-\frac{1}{4}}n^{\frac{3}{2}} \sup_{\mu,\nu}\|\sprod(\dsX(\tilde{G}_0)) \|_{lq^{1/2}},
\end{equation*}
 where we used $|I_1|=n/q$. The vectors from $\dsX(\tilde{G}_0)$ are depicted in the right picture of Fig.~\ref{fig:graphs}. Observe that we only need to carry out the first of the above steps, i.e. the Littlewood-Paley inequality, in order to completely determine $\vs$.  We continue in this direction until we have considered every vertex as then the	 expression in modulus equals to 1.
 
 Our approach works in the following way. We make direct use of the discussion above and subsequently distinguish between a.c.\,graphs either belonging to $\cT$ or to $\cC$. In the first case we apply both the Littlewood-Paley and the triangle inequaliy at a cost of $l^{1/2}q^{-1/4}n^{3/2}$ once, thus fully specifying one vertex and simultaneously fixing one coordinate of an adjacent vector. For this vertex, in turn, we only need to apply the Littlewood-Paley inequality. In doing so we save an entire power of $n$ in each step. 
 This gives the second entry from the minimum from the claim. The first entry is a revised version of the case $|V|=2$ from \cite{BilOnt083}, which (by \cite{PucDis17}) is valid for $\eps<1/2$. If, additionally, $G$ contains a cycle with edges of different color, i.e. $G\in\cC$, then, due to the hyperbolic assumption, there is at least one vertex which is fully specified by the other vertices of the cycle. Consequently, we gain a factor of $l^{1/2}q^{-1/4}n^{1/2}$ for each of the $t$ bicolored cycles. For further details the reader is referred to \cite{PucDis17}.
\end{proof}
Now, our strategy becomes more visible. While a.c.\,graphs with bicolored cycles are hard to handle combinatorically speaking, they yield much better estimates in terms of Lemma~\ref{lemma:beckgain} compared to graphs from $\cT$. As it turns out, generalized trees account for the lion share in our estimates. To see this, we adhere to the approach of Bilyk and Lacey once again and, additionally, keep trace of $\eps$ to find that
\begin{equation}
 \label{eqn:sumadm}
 \|\Psinsd\|_1\ls\sum_{v=2}^{q} \sum_{G \text{ admissible}, |V(G)|=v}\trho^{v}\|\sprod(\dsX(G))\|_{q^{1/2}}\qquad\text{for all }\eps<1/4.
\end{equation}
One can easily check that the summands for $v=2,3$ are bounded by an absolute constant if $\eps<1/6$.

In what follows we abbreviate ${[q] \choose v}=\{V\subseteq [q]:|V|=v\}$ as well as $\cV(V,l)=\{\mathbf{V}=(V_1,V_2,\ldots,V_l):V_j\neq\emptyset\text{ and } \mathbf{V} \text{ is a partition of } V\}$. Observe that the cardinality of the above set is given by the \emph{Stirling number of the second kind}, which is known to satisfy  $ \#\cV(l,V)\ls {|V| \choose l} l^{|V|-l}$, see \cite{RenOns69}.

Let us continue with the remaining sum in (\ref{eqn:sumadm}). We decompose each admissible graph $G$ into a union of its a.c.\,components. Due to a reduction lemma from \cite{BilOnt083} we know that for any two such subgraphs $G=G_1\cup G_2$ we have $\sprod(\dsX(G))=\sprod(\dsX(G_1))\sprod(\dsX(G_2))$. Subseqently, we may apply Hölder's inequality, which yields
\begin{align}
  \|\Psinsd\|_1
  &\!\ls\!\!
  \sum_{v=4}^{q}	 \!\sum_{V\in{[ q] \choose v }}\!	\sum_{l=1}^{v/2}	\sum_{\substack{(V_1,\ldots, V_l)\in\mathcal{V}(V,l)}}	\sum_{\substack{G=G_1\cup\cdots\cup G_l \\ G_j \text{ is a.c.\,on }V_j }} \prod_{j=1}^{l} \trho^{|V_j|}\left\| \sprod(\dsX(G_j))\right\|_{lq^{1/2}}\nonumber\\
  \label{eqn:streescycle}
  &=:
  \sum_{v=4}^{q}	 \sum_{V\in{[ q] \choose v }}	\sum_{l=1}^{v/2}	\sum_{\substack{(V_1,\ldots, V_l) \in\mathcal{V}(V,l)}}
  \left( \Stree+\Scycle\right),
  \end{align}
  where
  \begin{align*}
       \Stree
 &=
 \sum_{\substack{G=G_1\cup\cdots\cup G_l \\ G_j \in \cT(V_j)}} \prod_{j=1}^{l}\trho^{|V_j|}\left\| \sprod(\dsX(G_j))\right\|_{lq^{1/2}}\quad\text{and}\\
  \Scycle
  &=
 \sum_{\substack{G=G_1\cup\cdots\cup G_l \\ G_j \text{ a.c.\,on }V_j  \text{ and } \exists j_0:T(G_{j_0})\geq1}} \prod_{j=1}^{l}  \trho^{|V_j|}\left\| \sprod(\dsX(G_j))\right\|_{lq^{1/2}}.
 \end{align*}
with $T(G_j)=\max\{\tau:G_j \text{ contains } \tau \text{ disjoint bicolored cycles}\}$.

Before we proceed with the estimation we shall give one more technical lemma.
\begin{lemma}
 \label{lemma:lagrange}
  Let $l$, $k$, and $v$ be integers with $1\leq k\leq l\leq v/2$. Furthermore, consider $v_1,v_2,\ldots,v_l\in\bN$ with $v_j\geq2$, $1\leq j\leq l$, and $v_1+v_2+\cdots+v_l=v$. Then
 \begin{equation*}
  \left(\prod_{j=1}^{k} v_j^{ v_j-2}\right)\cdot\left(\prod_{j=k+1}^{l}v_j^{2v_j} \right)\ls\left(  \frac{v}{k}\right)^{v-2k},
 \end{equation*}
and if $k=0$, i.e. the first product vanishes, we obtain $(v/l)^{2v}$ as an upper bound.
\end{lemma}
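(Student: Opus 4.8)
The plan is to prove the bound $\left(\prod_{j=1}^{k} v_j^{v_j-2}\right)\cdot\left(\prod_{j=k+1}^{l}v_j^{2v_j}\right)\ls (v/k)^{v-2k}$ by first reducing to a cleaner optimization problem and then applying a Lagrange multiplier / convexity argument. First I would observe that the second product $\prod_{j=k+1}^{l} v_j^{2v_j}$ is the genuinely dangerous factor, since its exponent grows \emph{linearly} in $v_j$ rather than sublinearly; so the worst case ought to push as much mass as possible into the tail indices $j=k+1,\ldots,l$. But each such $v_j\geq 2$, and there are $l-k$ of them, so the tail carries at least $2(l-k)$ units of the total budget $v$; moreover $l\leq v/2$, so $l-k\leq v/2-k$. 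I would therefore first argue that, for fixed values of the tail variables, the product over $j=1,\ldots,k$ is maximized (up to the implied constant) by spreading the remaining mass as evenly as possible, because $x\mapsto x\log x$ is convex so $\sum_{j\le k}(v_j-2)\log v_j$ subject to fixed $\sum_{j\le k} v_j$ is maximized at a corner, i.e. by concentrating — wait, this needs care, and that care is exactly the main obstacle (see below).

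The cleanest route I would actually take: take logarithms and set $\Phi = \sum_{j=1}^k (v_j-2)\log v_j + \sum_{j=k+1}^l 2v_j\log v_j$, to be maximized over real $v_j\geq 2$ with $\sum v_j = v$. A Lagrange multiplier computation gives the stationarity conditions $\log v_j + 1 - 2/v_j = \lambda$ for $j\le k$ and $2\log v_j + 2 = \lambda$ for $j>k$. The second family forces all tail $v_j$ equal, say to some common value $w$; the first family's left-hand side $\log v_j + 1 - 2/v_j$ is strictly increasing in $v_j$, so again all head $v_j$ are equal, say to $u$. Then $ku + (l-k)w = v$ and one checks that at the interior critical point the objective is dominated by the regime where the head variables absorb essentially all the budget, giving $u\approx v/k$ and hence $\prod_{j\le k} v_j^{v_j-2} \approx (v/k)^{v-2k}$ while the tail contributes only a bounded factor (each $w$ stays $O(1)$, and there are at most $v/2$ of them, but one must verify the tail contribution $w^{2w(l-k)}$ does not blow up — this holds because the maximum over $w\ge 2$ of the ratio is attained at a bounded $w$ once the head is forced to scale). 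The boundary cases $v_j=2$ must also be inspected directly, but there $v_j^{v_j-2}=1$ and $v_j^{2v_j}=16$, harmless.

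Concretely I would structure the write-up as: (1) reduce to the case $k\ge 1$ (the $k=0$ claim $(v/l)^{2v}$ being immediate since then $\prod v_j^{2v_j}\le v^{2\sum v_j} = v^{2v} \le (v/l)^{2v}\cdot l^{2v}$ — no wait, that's the wrong direction; for $k=0$ one instead notes $\prod_j v_j^{2v_j}$ with $\sum v_j=v$, $v_j\ge 2$, $l\le v/2$: by the weighted AM–GM / convexity of $x\log x$ the max of $\sum 2v_j\log v_j$ at fixed sum and fixed count $l$ is at the corner, i.e. one big block of size $v-2(l-1)$ and the rest $=2$, giving $\ls (v-2(l-1))^{2(v-2(l-1))} \le (v/l)^{2v}$ after a short estimate, using $l\le v/2$); (2) for $k\ge 1$, bound the tail product crudely by noting each tail index contributes $v_j^{2v_j}$ with $v_j\le v-2(k)-2(l-k-1)$, and absorb it; (3) handle the head product via the convexity of $x\mapsto (x-2)\log x$ together with $\sum_{j\le k} v_j \le v$ and the count $k$, concluding $\prod_{j\le k} v_j^{v_j-2}\ls (v/k)^{v-2k}$. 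The main obstacle will be step (2)–(3) interplay: showing that transferring mass from the head to the tail never helps, i.e. that the true maximum really sits with the head carrying almost all the weight. This is where a careful \emph{exchange argument} — comparing the configuration $(u,w)$ with $(u+\delta, w-\delta)$ and checking the sign of the derivative of $\Phi$ — is needed; I expect the inequality $\log(v/k) > $ (the per-unit cost in the tail) to be what makes it work, valid once $v/k$ is large, with the bounded-$v/k$ regime (where $v\le Ck$, so $v-2k = O(k) = O(v)$ and everything is $O(1)^v$... hence $\ls C^v$, absorbed into $(v/k)^{v-2k}$ only if $v/k$ bounded below — but $v/k\ge v/(v/2) = 2$, so $v/k\ge 2$ always, and $(v/k)^{v-2k}\ge 2^{v-2k}$, which does dominate a fixed $C^{v-2k}$ only if $C\le 2$; so this sub-case likewise needs the explicit constant tracking) dealt with by the crude global bound.
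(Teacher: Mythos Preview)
Your proposal and the paper both open with the Lagrangian
\[
\mathcal{L}=\sum_{j\le k}(v_j-2)\log v_j+2\sum_{j>k}v_j\log v_j-\lambda\Big(\sum_j v_j-v\Big)
\]
and the observation that stationarity forces a common head value $u$ and a common tail value $w$. From there the paper solves the system explicitly: $u=2/W(2e^{1-\lambda})$ with $W$ the Lambert function, $w=e^{\lambda/2-1}$, and then, via a case split on whether $l-k$ exceeds $\sqrt{kv}$, shows $e^{\lambda/2}\ls\sqrt{v/k}$. Combined with $u\le e^{\lambda}$ this bounds the product at the critical configuration by $e^{\lambda(v-2k)}\ls(v/k)^{v-2k}$.

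The central gap in your sketch is the assertion that ``the head variables absorb essentially all the budget, giving $u\approx v/k$\ldots while each $w$ stays $O(1)$''. The stationarity system does not yield this: as the paper's own solution shows, $w=e^{\lambda/2-1}$ is of order $\sqrt{v/k}$, not $O(1)$, so the tail product cannot be dismissed as a bounded factor and your steps (2)--(3) collapse. You in fact flag the underlying obstruction yourself---each summand is convex in its own variable, so the maximum on the simplex sits at an extreme point rather than at the Lagrange point---but you never resolve it; the promised exchange argument remains at the level of ``I expect the inequality\ldots to be what makes it work''. Your $k=0$ treatment has the same defect: you correctly locate the corner $(v-2(l-1),2,\ldots,2)$, but the asserted bound $(v-2(l-1))^{2(v-2(l-1))}\ls(v/l)^{2v}$ is false already for $l=2$ and large $v$, where the ratio behaves like $4^{v}/v^{4}\to\infty$. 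In short, you share the paper's starting point but carry out none of the analytic work---the explicit Lambert-$W$ solution and the $l-k$ versus $\sqrt{kv}$ dichotomy---that the paper uses to control $\lambda$ and hence the critical value.
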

\begin{proof}
 We confine ourselves to the case where $k\geq1$, since the other case follows the same spirit. Let us consider the Lagrangian
 \begin{equation*}
  \mathcal{L}(v_1,\ldots,v_l;\lambda) =\sum_{j=1}^{k}(v_j-2)\log v_j+2\sum_{j=k+1}^{l}v_{j}\log v_j-\lambda(v_1+\cdots+v_l-v).
 \end{equation*}
 Simple algebraic manipulations lead to the solution
 \begin{gather}
 v_j = \frac{2}{w}, \quad1\leq j\leq k, \qquad \qquad v_j=e^{\lambda/2-1},\quad k<j\leq l,\nonumber\\
 \label{eqn:lagrangesolutions}
 \lambda= \frac{2}{3}\left(2+\log\left(\frac{wv-2k}{e^{1-\lambda} w(l-k)}\right)\right) =  \frac{2}{3}\left(2+\log\left(\frac{ve^{1-\lambda}-ke^w}{e^{2-2\lambda}(l-k)}\right)\right),
\end{gather}
where $w=W(2e^{1-\lambda})$ with $W$ denoting the \emph{Lambert $W$ function}. Observe that $0<w\leq1$ since $v_j\geq2$, and, consequently,
\begin{equation*}
 v_j=\frac{2}{w}\leq \frac{2}{w}e^{1-w}=\frac{2e}{w e^w}=e^{\lambda},\quad 1\leq j\leq k.
\end{equation*}
Furthermore, if $l-k>\sqrt{kv}$ we immediately get $v_j=e^{\lambda/2-1}\leq \frac{v}{l-k}<\sqrt{v/k}$ for $\quad k<j\leq l
$ by (\ref{eqn:lagrangesolutions}). On the other hand, if  $l-k\leq\sqrt{kv}$ we can solve the last expression in (\ref{eqn:lagrangesolutions}) for $\lambda$, giving
\begin{equation*}
  \lambda=2 \log\left(\frac{l-k+\sqrt{(l-k)^2+4e^{w+1}kv}}{2ke^{w}}\right),
\end{equation*}
and, thus, $e^{\lambda/2}\ls\sqrt{v/k}$ can be obtained without difficulty. Consequently, the left-hand side from the claim can be estimated by
\begin{equation*}
 e^{\lambda(v_1+\cdots+v_k-2k)} e^{2(\frac{\lambda}{2}-1)(v_{k+1}+\cdots+v_l)}
 \ls
  e^{\lambda (v_1+\cdots +v_l)-2\lambda k }
 \!=\!
 e^{\lambda(v-2k)}
 \!\ls\!
 \left(\frac{v}{k}\right)^{v-2k}\!.
\end{equation*}
\end{proof}
Within the subsequent paragraphs we show
\begin{equation}
\label{eqn:stree}
 \sum_{v=4}^{q}\sum_{V\in{[q]\choose v}}\sum_{l=1}^{v/2}\sum_{(V_1,\ldots,V_l)\in \cV(V,l)}\Stree \ls1\qquad\text{for all }\eps<(8-\sqrt{41})/23.
\end{equation}
Indeed, for all $\eps<1/3$ we have by Lemmas~\ref{lemma:numberofgraphs}, \ref{lemma:beckgain}, and \ref{lemma:lagrange}
\begin{equation}
\label{eqn:streem}
 \Stree
 \ls
 \prod_{j=1}^{l}M_{|V_j|,l} |V_j|^{|V_j|-2}\ls l^{-v+2l}v^{v-2l}\prod_{j=1}^{l}M_{|V_j|,l}.
\end{equation}
Let us choose $\at\in(0,1/2)$ arbitrarily for now and consider the sum over $l$. For the first $\at v$ summands we choose the second entry of the minimum $M_{|V_j|,l}$ and the first entry for all the others. Since $|V_1|+\cdots+|V_l|=v$ this yields
\begin{align*}
 \sum_{l=1}^{v/2}\sum_{(V_1,\ldots,V_l)\in \cV(V,l)}\!\!\!\!\!\Stree 
 &\ls
 \sum_{l=1}^{\at v}\! {v \choose l}\! l^{\frac{v}{2}+l}v^{v-2l}n^{-\frac{v}{2}+l}\!+\!\!\sum_{l=\at v+1}^{v/2}\!\!\! {v \choose l} l^{\frac{5}{2}l}v^{v-2l}q^{l}n^{-\frac{l}{2}}\\
 &=:\Sti +\Stii.
\end{align*}
By Stirling's formula we immediately obtain
\begin{equation}
\label{eqn:sti}
 \Sti\ls \sum_{l=1}^{\at v} l^{\frac{v}{2}-\frac{1}{2}}v^{v-l}n^{-\frac{v}{2}+l}
 \leq v^{\frac{3}{2}-\frac{1}{2}}n^{-\frac{v}{2}}\sum_{l=1}^{\at v}(v^{-1}n)^{l}
 \ls v^{v(\frac{3}{2}-\at)-\frac{1}{2}}n^{-v(\frac{1}{2}-\at)}.
\end{equation}
For the estimation of $\Stii$ we observe that
\begin{equation*}
 {v\choose l\!+\!\at v\!+\!1}\!\leq\! {v\!-\!\at v\!-\!1\choose l}(l+\at v +1)^{-\at v-1}v^{\at v+1}\! \ls\! {v\!-\!\at v\!-\!1\choose l}
\end{equation*}
and, consequently,
\begin{align}
 \Stii 
 &\ls
 v^{v(1+\frac{\at}{2})-\frac{1}{2}}q^{\at v+1}n^{-\frac{\at}{2}v-\frac{1}{2}}\sum_{l=0}^{\frac{v}{2}-\at v-1}{v-\at v-1\choose l}(v^{\frac{1}{2}}qn^{-\frac{1}{2}})^{l}\nonumber\\
 \label{eqn:stii}
 &\ls v^{v(1+\frac{\at}{2})+\frac{1}{2}}q^{\at v+1} n^{-\frac{\at }{2}v-\frac{1}{2}},
\end{align}
since $(1+v^{1/2}qn^{-1/2})^{v-\at v-1}\ls \exp(v^{3/2}q n^{-1/2})\ls1$ for $\eps<1/5$.

In the same spirit we may now derive (\ref{eqn:stree}):
\begin{align*}
& \sum_{v=4}^{q}\sum_{V\in{[q]\choose v}}(\Sti+\Stii)\\
  &\ls\!\sum_{v=0}^{q-4}\!{q\!-\!4\choose v} \!\!\left( \!\!q^{v(\!\frac{3}{2}-\at\!)+\frac{11}{2}-4\at} \! n^{-v(\!\frac{1}{2}-\at\!)-2+4\at}\!\! +\! q^{v(\!1+\frac{3\at}{2}\!)+\frac{11}{2}+6\at}n^{-\frac{\at}{2}v-\frac{1}{2}-2\at}  \!\!\right)\\
  &\ls
  q^{\frac{11}{2}-4\at}n^{-2+4\at}e^{q^{5/2-\at}n^{-1/2+\at}}+q^{\frac{11}{2}+6\at}n^{-\frac{1}{2}-2\at}e^{q^{2+3\at/2}n^{-\at/2}}.
\end{align*}
The latter expression is bounded by a constant if
\begin{equation*}
 \eps<\epst(\at):=\min\left\{\frac{4-8\at}{11-8\at},\frac{1-2\at}{5-2\at},\frac{1+4\at}{11+12\at},\frac{\at}{4+3\at}   \right\}.
\end{equation*}
It is fairly easy to see that the optimal $\at\in(0,1/2)$ is $\ato=(\sqrt{41}-5)/4$, for which we have $\epst(\ato)=(8-\sqrt{41})/23$, and (\ref{eqn:stree}) follows.

What is left to show is that the part of (\ref{eqn:streescycle}) comprising $\Stree$ outweighs the part with $\Scycle$. To this end we notice that  a bicolored cycle can only occur if it contains at least four vertices and we may thus estimate
\begin{align*}
 \Scycle
 &\ls
 \sum_{t=1}^{v/4}\sum_{\substack{t1,\ldots,t_l\geq0 \\ t_1+\cdots+t_l=t}} \sum_{\substack{G_1 \text{ a.c.\,on } V_1\\T(G_1)=t_1}} \cdots \sum_{\substack{G_l \text{ a.c.\,on } V_l\\T(G_l)=t_l}}\prod_{j=1}^{l} \trho^{|V_j|}\|\sprod(\dsX(G_j))\|_{lq^{1/2}}\\
 &\ls (\Ssl+\Sgl)\prod_{j=1}^{l}M_{|V_j|,l},
\end{align*}
where we used Lemma~\ref{lemma:beckgain} with $\eps<1/3$ and where we set
\begin{align*}
 \Ssl 
 &=
 \sum_{t=1}^{l-1}l^{-\frac{t}{2}}q^{\frac{t}{4}}n^{-\frac{t}{2}} \sum_{\substack{t_1,t_2,\ldots,t_l\geq0 \\ t_1+t_2+\cdots+t_l=t}} \sum_{\substack{G_1 \text{ a.c.\,on } V_1 \\ T(G_1)=t_1}}\cdots\sum_{\substack{G_l \text{ a.c.\,on } V_l \\ T(G_l)=t_l}}1, \text{ and}\\
 \Sgl 
 &=
 \sum_{t=l}^{v/4}l^{-\frac{t}{2}}q^{\frac{t}{4}}n^{-\frac{t}{2}} \sum_{\substack{t_1,t_2,\ldots,t_l\geq0 \\ t_1+t_2+\cdots+t_l=t}} \sum_{\substack{G_1 \text{ a.c.\,on } V_1 \\ T(G_1)=t_1}}\cdots\sum_{\substack{G_l \text{ a.c.\,on } V_l \\ T(G_l)=t_l}}1.
\end{align*}
Observe that, for fixed $t$, at least $l-t$ of the subgraphs occuring in $\Ssl$ do not contain a bicolored cycle. Hence, we may apply Lemma~\ref{lemma:lagrange} with $k=l-t$, and together with Stirling's formula this yields
\begin{align}
 \Ssl
 &\ls
 \sum_{t=1}^{l-1}l^{-\frac{t}{2}}q^{\frac{t}{4}}n^{-\frac{t}{2}}{t+l-1 \choose l-1}\left(\frac{v}{l-t}\right)^{v-2(l-t)}\nonumber\\
\label{eqn:ssl}
 & \ls 
 l^{-\frac{1}{2}}v^{v-2l}\sum_{t=1}^{l-1}(l-t)^{-v+2(l-t)}\left(l^{-\frac{1}{2}}v^2q^{\frac{1}{4}}n^{-\frac{1}{2}}\right)^{t}.
\end{align}
Moreover, as $\eps\leq1/15$, each summand from the latter expression is bounded by $H(t)=(l-t)^{-v+2(l-t)}(l^{-1/2}v^{-21/4})^t$. Obviously, $H(1)\leq l^{-v+2l}$, which is the corresponding part of $\Stree$, see (\ref{eqn:streem}). Furthermore, the only critical point of $H$ is given by $t_0$ with $l-t_0=v/(2W(z_0))$, $z_0=e/2\cdot l^{1/4}v^{29/8}$. Since $z_0\geq e\cdot2^{54/4}$ we have $W(z_0)>79/20$. On the other hand, for each $\kappa>0$
there exists a constant $c>0$ such that $W(z_0)\leq z_0^{\kappa}+c$. By choosing $\kappa=1/1711$, for instance, we see that $H(t_0)$ is bounded by $H(l-1)$ from above.

Thus, it remains to investigate $H(l-1)$. For $l\geq\ato v+1$ this number is bounded by $H(1)$, which has already been dealt with. For $l\leq\ato$ the summands themselves are not well comparable to parts of $\Stree$ individually. However, in average, that is, considering everything down to the  sum over $l$, they are bounded by (\ref{eqn:sti}). Indeed, for all $\eps<4/35$ we have
\begin{multline*}
 \sum_{l=1}^{\ato v}\!\!\sum_{(V_1,\ldots,V_l)\in\cV(V,l)}  \prod_{j=1}^{l}\!M_{|V_j|,l}\Ssl 
 \ls
 \sum_{l=1}^{\ato v}\!\!\!{v\choose l} l^{v-l}l^{\frac{v}{2}}n^{-\frac{v}{2}+l}l^{-\frac{1}{2}}v^{v-2l}H(l-1)\\
 \ls
 v^{v(\frac{5}{2}-\frac{35}{4}\ato)+\frac{19}{4}}n^{-v(\frac{1}{2}-\ato)}
 \ls
 v^{v(\frac{3}{2}-\ato)-\frac{1}{2}}n^{-v(\frac{1}{2}-\ato)}.
\end{multline*}

For the study of $\Sgl$ we proceed similarly to (\ref{eqn:ssl}). We exploit Lemma~\ref{lemma:lagrange}, but now for $k=0$, and obtain
\begin{equation*}
 \Sgl \ls  l^{-2v-l+\frac{1}{2}} v^{2v+l-1}\sum_{t=l}^{v/4}\left( l^{-\frac{1}{2}} q^{\frac{1}{4}} n^{-\frac{1}{2}} \right)^{t}
 \ls l^{-2v-\frac{3}{2} l+\frac{1}{2}} v^{2v+l-1} q^{\frac{l}{4}}n^{-\frac{l}{2}}.
\end{equation*}
The rest follows more or less the same strategy as proving (\ref{eqn:stree}). I.e., we split up the sum over $l$ at $\ato v$, estimate accordingly as we did for $\Sti$ and  $\Stii$ and find that their upper bounds (\ref{eqn:sti}) and (\ref{eqn:stii}) dominate the resulting expressions.  For full details, see \cite{PucDis17}, again. This finishes the proof of Theorem~\ref{thm:main}.

\section{Acknowledgements}
The author is extremely grateful to his supervisor, Gerhard Larcher, for his valuable consultation, for proof reading  and for his general assistance and encouragement during the writing of this paper. 

\bibliography{mybib}
\bibliographystyle{plain}
\end{document}